\def\quot#1#2{#1/\!\!/#2}
\def\C{\mathbb {C}}
\def\A{\mathcal {A}}
\def\N{\mathbb N}
\def\F{\mathcal {F}}
\def\G{\mathcal {G}}
\def\Der{\operatorname{Der}}
\def\inv{^{-1}}
\def\phi{{\varphi}}
\def\AA{\mathfrak{A}}
\def\CC{C^0}
\def\FF{\mathfrak{F}}
\def\pt{\partial}
\def\E{\mathcal E}
\def\O{\mathcal O}
\def\ci{C^\infty}
\def\Mor{\operatorname{Mor}}
\def\Aut{\operatorname{Aut}}
\def\Id{\operatorname{Id}}
\def\LF{\mathcal {LF}}
\def\LAc{\mathcal{LA}_c}
\def\LA{\mathcal{LA}}
\def\alg{{\operatorname{alg}}}
\def\gf{{\operatorname{fin}}}
\renewcommand{\sl}{{\operatorname{/\!\!/}}}
\newcommand{\red}{{\operatorname{red}}}
\numberwithin{equation}{subsection}
\newtheorem{theorem}[subsection]{Theorem}
\newtheorem{lemma}[subsection]{Lemma}
\newtheorem{proposition}[subsection]{Proposition}
\newtheorem{corollary}[subsection]{Corollary}
\theoremstyle{definition}
\newtheorem{definition}[subsection]{Definition}
\theoremstyle{remark}
\newtheorem{remark}[subsection]{Remark}
\newtheorem{remarks}[subsection]{Remarks}
\title{An Oka principle  for Stein $G$-manifolds}
\author{Gerald W.~Schwarz}
\address{Gerald W.~Schwarz, Department of Mathematics, Brandeis University, Waltham MA 02454-9110, USA}
\email{schwarz@brandeis.edu}
\subjclass[2010]{Primary 32M05.  Secondary 14L24, 14L30, 32E10, 32M17, 32Q28. }
\keywords{Oka principle, Stein manifold,  reductive complex group, categorical quotient.}
\begin{document}
\begin{abstract}  
Let $G$ be a reductive complex Lie group acting holomorphically on  Stein manifolds $X$ and $Y$. Let $p_X\colon X\to Q_X$ and $p_Y\colon Y\to Q_Y$ be the quotient mappings. Assume that we have a biholomorphism $Q:= Q_X\to Q_Y$ and an open cover $\{U_i\}$ of $Q$ and $G$-biholomorphisms $\Phi_i\colon p_X\inv(U_i)\to p_Y\inv(U_i)$ inducing the identity on $U_i$.  
There is a sheaf of groups $\A$ on $Q$ such that the isomorphism classes of all possible $Y$ is the cohomology set $H^1(Q,\A)$. The main question we address is to what extent $H^1(Q,\A)$ contains only topological information. For example, if $G$ acts freely on $X$ and $Y$, then $X$ and $Y$ are principal $G$-bundles over $Q$, and Grauert's Oka principle says that the set of  isomorphism classes of holomorphic principal $G$-bundles   over $Q$ is canonically the same as the set of isomorphism classes of topological principal $G$-bundles over $Q$. We investigate to what extent we have an Oka principle for $H^1(Q,\A)$.
\end{abstract}

\maketitle
\tableofcontents

 \section{Introduction}
  Let $X$ be a Stein $G$-manifold where   $G$ is a complex reductive group. There is a quotient space $Q_X=\quot X G$ (or just $Q$ if $X$ is understood) and surjective morphism $p_X$ (or just $p$) from $X$ to $Q$. Then  $Q$  is a reduced normal Stein space and the fibers of $p$ are canonically affine $G$-varieties (generally, neither reduced nor irreducible) containing precisely one closed $G$-orbit.  For $S$ a subset of $Q$ we denote $p\inv(S)$ by $X_S$ and we abbreviate $X_{\{q\}}$ as $X_q$, $q\in Q$.  We have a sheaf of groups $\A^X$ (or just $\A$) on $Q$   where $\A(U)=\Aut_U(X_U)^G$ is the group of  holomorphic $G$-automorphisms of   $X_U$ which induce the identity map $\Id_U$ on $\quot{X_U}G=U$.
  
  Let $Y$ be another Stein $G$-manifold. In \cite{KLS,KLSOka} we determined sufficient conditions for $X$ and $Y$ to be equivariantly $G$-biholomorphic. Clearly we need that $Q_Y$ is biholomorphic to  $Q_X$, so let us assume that we have fixed an isomorphism   of $Q_Y$ with $Q=Q_X$.  Let us also suppose that there are no local obstructions to a $G$-biholomorphism of $X$ and $Y$ covering $\Id_Q$. (See \cite[Theorem 1.3]{KLSOka} for sufficient conditions for vanishing of the local obstructions.)\ Then there is an open cover $U_i$ of $Q$ and $G$-biholomorphisms $\Phi_i\colon X_{U_i}\to Y_{U_i}$ inducing $\Id_{U_i}$. We say that \emph{$X$ and $Y$ are locally $G$-biholomorphic over $Q$}. Set $\Phi_{ij}=\Phi_i\inv\Phi_j$. Then the $\Phi_{ij}\in\A(U_i\cap U_j)$ are a $1$-cocycle, i.e., an element of $Z^1(Q,\A)$ (we repress explicit mention of the open cover). Conversely, given $\Psi_{ij}\in Z^1(Q,\A)$ (for the same open cover) we can construct a corresponding  complex $G$-manifold $Y$ from the disjoint union of the $X_{U_i}$ by identifying $X_{U_j}$ and $X_{U_i}$ over $U_i\cap U_j$ via $\Psi_{ij}$. By \cite[Theorem 5.11]{KLSOka} the manifold $Y$ is   Stein, and it is obviously locally $G$-biholomorphic to $X$ over $Q$. Let $\Psi_{ij}'$ be another cocycle for $\{U_i\}$ corresponding to the Stein $G$-manifold $Y'$. If $Y'$ is $G$-biholomorphic to $Y$ (inducing $\Id_Q$), then $\Psi_{ij}$ and $\Psi_{ij}'$ give the same class in $H^1(Q,\A)$. Thus $H^1(Q,\A)$ is the set of $G$-isomorphism classes of Stein $G$-manifolds $Y$ which are locally $G$-biholomorphic to $X$ over $Q$ where the $G$-isomorphisms are required to induce the identity on $Q$. 
  
  A fundamental question is whether or not $H^1(Q,\A)$ contains more than topological information. For example, suppose that $G$ acts freely on $X$ so that $X\to Q$ is a principal $G$-bundle. Then $X$ corresponds to an element of $H^1(Q,\E)$ where $\E$ is the sheaf of germs of holomorphic mappings of $Q$ to $G$. By Grauert's famous Oka principle \cite{GrauertFaserungen}, $H^1(Q,\E)\simeq H^1(Q,\E^c)$ where $\E^c$ is the sheaf of germs of continuous mappings of $Q$ to $G$. In other words, the set of isomorphism classes of holomorphic principal $G$-bundles over $Q$ is the same as the set of isomorphism classes of topological principal $G$-bundles over $Q$. The main point of this note is to establish a similar Oka principle in our setting.
  
We define another sheaf  of groups $\A_c$ on $Q$. For $U$ open in $Q$, $\A_c(U)$ consists of    ``strongly continuous'' families  $\sigma=\{\sigma_q\}$ of $G$-automorphisms of the affine $G$-varieties $X_q$, $q\in U$. We define the notion of strongly continuous family in \S \ref{sec:stronglycontinuous}.   The sheaf  $\A$ is a subsheaf of $\A_c$.

Fix an open cover  $\{U_i\}$ of $Q$. Our main theorems are the following (the first of which is a consequence of    \cite[Theorem 1.4]{KLSOka}).

\begin{theorem}\label{thm:A}
Let $\Phi_{ij}$, $\Psi_{ij}\in Z^1(Q,\A)$ and suppose that there are $c_i\in\A_c(U_i)$ satisfying $\Phi_{ij}=c_i \Psi_{ij}c_j\inv$. Then there are $c_i'\in\A(U_i)$ satisfying the same equation.
\end{theorem}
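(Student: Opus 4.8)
The plan is to translate the cohomological hypothesis into a statement about two Stein $G$-manifolds and then quote the equivariant Oka principle \cite[Theorem 1.4]{KLSOka}. First I would build, out of the cocycle $\Psi_{ij}$ and the cover $\{U_i\}$, the complex $G$-manifold $Y'$ obtained from the disjoint union of the $X_{U_i}$ by identifying $X_{U_j}$ with $X_{U_i}$ over $U_i\cap U_j$ via $\Psi_{ij}$, and likewise the manifold $Y$ from $\Phi_{ij}$; both are Stein by \cite[Theorem 5.11]{KLSOka}. They have quotient $Q$, and the inclusion of the $i$-th chart gives tautological $G$-biholomorphisms $\Psi_i\colon X_{U_i}\to Y'_{U_i}$ and $\Phi_i\colon X_{U_i}\to Y_{U_i}$ inducing $\Id_{U_i}$, with $\Psi_{ij}=\Psi_i\inv\Psi_j$ and $\Phi_{ij}=\Phi_i\inv\Phi_j$ on $U_i\cap U_j$. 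In particular $Y$ and $Y'$ are locally $G$-biholomorphic over $Q$.

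Next I would check that the $c_i$ assemble into a strongly continuous $G$-isomorphism $\Theta\colon Y'\to Y$ over $Q$. Viewing $c_i\in\A_c(U_i)$ as a strongly continuous family $\{c_{i,q}\}_{q\in U_i}$ of $G$-automorphisms of the fibers $X_q$, set $\Theta_q:=\Phi_{i,q}\circ c_{i,q}\circ\Psi_{i,q}\inv\colon Y'_q\to Y_q$ for $q\in U_i$. For $q\in U_i\cap U_j$ the hypothesis $\Phi_{ij}=c_i\Psi_{ij}c_j\inv$, read fiberwise and combined with $\Phi_{ij}=\Phi_i\inv\Phi_j$ and $\Psi_{ij}=\Psi_i\inv\Psi_j$, gives $\Phi_{i,q}c_{i,q}\Psi_{i,q}\inv=\Phi_{j,q}c_{j,q}\Psi_{j,q}\inv$, so $\Theta=\{\Theta_q\}$ is well defined. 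On each chart $\Theta$ is the composite of the holomorphic maps $\Phi_i$, $\Psi_i\inv$ with the strongly continuous family $c_i$, so $\Theta$ is strongly continuous in the sense of \S\ref{sec:stronglycontinuous} (strong continuity being local and stable under composition with holomorphic maps), and it is invertible, the inverse being assembled from the families $c_i\inv$. Thus $\Theta$ is a strongly continuous $G$-isomorphism of $Y'$ with $Y$ over $Q$.

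Now I would invoke \cite[Theorem 1.4]{KLSOka} (the equivariant Oka principle): since $Y$ and $Y'$ are Stein $G$-manifolds that are locally $G$-biholomorphic over $Q$ and admit the strongly continuous $G$-isomorphism $\Theta$ over $Q$, they admit a \emph{holomorphic} $G$-isomorphism $\Theta'\colon Y'\to Y$ over $Q$. Finally I would set $c_i':=\Phi_i\inv\circ\Theta'|_{Y'_{U_i}}\circ\Psi_i\colon X_{U_i}\to X_{U_i}$. Each $c_i'$ is a holomorphic $G$-automorphism of $X_{U_i}$ inducing $\Id_{U_i}$, hence $c_i'\in\A(U_i)$, and on $U_i\cap U_j$ one computes
\[
c_i'\,\Psi_{ij}\,(c_j')\inv=(\Phi_i\inv\Theta'\Psi_i)(\Psi_i\inv\Psi_j)(\Psi_j\inv(\Theta')\inv\Phi_j)=\Phi_i\inv\Theta'(\Theta')\inv\Phi_j=\Phi_i\inv\Phi_j=\Phi_{ij},
\]
which is the asserted equation.

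The real content lies entirely in the third step: deriving the holomorphic isomorphism $\Theta'$ from the merely continuous $\Theta$ is exactly \cite[Theorem 1.4]{KLSOka}, which I would use as a black box. The only other point requiring care is bookkeeping with the notion of strong continuity: one must verify that the glued family $\Theta$ is strongly continuous in the precise sense used to define $\A_c$, and that this matches the hypothesis of \cite[Theorem 1.4]{KLSOka}; both follow from the local nature of strong continuity and its stability under composition with holomorphic maps.
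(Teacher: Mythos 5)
Your proof is correct and follows essentially the same route as the paper: translate the coboundary hypothesis into a strongly continuous $G$-isomorphism between the two Stein $G$-manifolds associated to $\Psi_{ij}$ and $\Phi_{ij}$, then invoke \cite[Theorem 1.4]{KLSOka} (Theorem \ref{thm:mainKLSOka} here) to upgrade it to a holomorphic $G$-isomorphism, and finally read off the holomorphic splitting. The only cosmetic difference is that you build both manifolds $Y'$ and $Y$ and glue the $c_i$ into a map $Y'\to Y$, whereas the paper first uses the twist construction (Lemma \ref{lem:reduceId}) to normalize $\Phi_{ij}$ to the trivial cocycle, so that $Y$ can be replaced by $X$ itself and only one auxiliary manifold is needed; the two formulations are interchangeable.
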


\begin{theorem}\label{thm:B}
Let $\Phi_{ij}\in Z^1(Q,\A_c)$. Then there are $c_i\in\A_c(U_i)$ such that $c_i\Phi_{ij}c_j\inv\in Z^1(Q,\A)$.
\end{theorem}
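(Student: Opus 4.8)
The plan is to deform a given cocycle $\Phi_{ij}\in Z^1(Q,\A_c)$ by a continuous coboundary so that it takes values in $\A$, doing this by induction along a filtration of $\A$ and $\A_c$ whose successive quotients are abelian and ``coherent.'' This reduces the whole problem to three standard ingredients: Cartan's Theorem~B on the Stein space $Q$, the softness (hence vanishing of $H^{\geq1}$) of sheaves of continuous sections, and the classical Oka principle for the ``linear part'' coming from Grauert \cite{GrauertFaserungen} together with the reductions of \cite{KLS,KLSOka}. The structural input I would establish (following the methods of \cite{KLSOka}) is a pair of compatible decreasing filtrations by normal subsheaves of groups $\A=\A^{(0)}\supseteq\A^{(1)}\supseteq\cdots$ and $\A_c=\A_c^{(0)}\supseteq\A_c^{(1)}\supseteq\cdots$, with $\A^{(n)}=\A\cap\A_c^{(n)}$, indexed by the order to which a fiberwise automorphism agrees with the identity, such that: (i) $\A/\A^{(1)}$ is the sheaf of $G$-automorphisms of the linear model of $X$ over $Q$ — a bundle of (possibly disconnected) reductive complex groups assembled from the slice representations — with continuous analogue $\A_c/\A_c^{(1)}$; (ii) for $n\geq1$ the quotient $\A^{(n)}/\A^{(n+1)}$ is abelian and is canonically a coherent $\O_Q$-module $\M_n$ (a module of covariants, finitely generated over $\O_Q$ by the Hilbert finiteness theorem for modules of covariants), while $\A_c^{(n)}/\A_c^{(n+1)}$ is the corresponding sheaf $\M_n^{\mathrm{cont}}$ of strongly continuous sections, with these identifications mutually compatible; and (iii) $\bigcap_n\A_c^{(n)}$ is the identity on each fiber, and a section of $\A_c$ over a Stein open set which lies in $\A$ modulo $\A^{(n)}$ for every $n$ already lies in $\A$. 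Since automorphisms over $Q$ act $\O_Q$-linearly on the fiberwise coordinate rings, the conjugation action of $\A$ on $\M_n$ is $\O_Q$-linear, so twisting $\M_n$ by a holomorphic cocycle yields again a coherent $\O_Q$-module $\M_n^{\Phi}$.

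Granting this, the argument runs as follows. First, push $\Phi_{ij}$ to $Z^1(Q,\A_c/\A_c^{(1)})$: as $Q$ is Stein, the Oka principle for the reductive-group-bundle sheaf $\A/\A^{(1)}$ provides $d_i\in(\A_c/\A_c^{(1)})(U_i)$ with $d_i\,(\Phi_{ij}\bmod\A_c^{(1)})\,d_j\inv\in Z^1(Q,\A/\A^{(1)})$; since $\A_c\to\A_c/\A_c^{(1)}$ is a surjection of soft sheaves of groups, the $d_i$ lift to $c_i\in\A_c(U_i)$, and after replacing $\Phi_{ij}$ by $c_i\Phi_{ij}c_j\inv$ we may assume $\Phi_{ij}$ is holomorphic modulo $\A^{(1)}$. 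For the inductive step, assume $\Phi_{ij}$ is holomorphic modulo $\A^{(n)}$, i.e.\ $\Phi^{(n)}_{ij}:=\Phi_{ij}\bmod\A^{(n)}$ lies in $Z^1(Q,\A/\A^{(n)})$. The obstruction to lifting this holomorphic cocycle to a holomorphic cocycle modulo $\A^{(n+1)}$ lies in $H^2(Q,\M_n^{\Phi})$, which vanishes by Cartan's Theorem~B since $Q$ is Stein and $\M_n^{\Phi}$ is coherent; hence there is a holomorphic $\Psi_{ij}\in Z^1(Q,\A/\A^{(n+1)})$ with $\Psi_{ij}\equiv\Phi^{(n)}_{ij}\pmod{\A^{(n)}}$. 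The ratio $\tau_{ij}:=\Psi_{ij}\inv\,(\Phi_{ij}\bmod\A_c^{(n+1)})$ then takes values in the abelian subsheaf $\A_c^{(n)}/\A_c^{(n+1)}$ and is a cocycle for its $\Phi^{(n)}$-twisted form $\M_n^{\Phi,\mathrm{cont}}$; its class in $H^1(Q,\M_n^{\Phi,\mathrm{cont}})$ vanishes by softness, so $\tau_{ij}$ is the coboundary of a $0$-cochain $(t_i)$ with $t_i\in(\A_c^{(n)}/\A_c^{(n+1)})(U_i)$. Lifting each $t_i$ to $c_i^{(n)}\in\A_c^{(n)}(U_i)$ (again a soft surjection) and replacing $\Phi_{ij}$ by $c_i^{(n)}\Phi_{ij}(c_j^{(n)})\inv\equiv\Psi_{ij}\pmod{\A^{(n+1)}}$, we have advanced the induction: $\Phi_{ij}$ is now holomorphic modulo $\A^{(n+1)}$, and its reduction modulo $\A^{(n)}$ is unchanged.

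Running this for all $n$ produces corrections $c_i^{(n)}\in\A_c^{(n)}(U_i)$, and if the infinite composition $c_i:=\lim_{N\to\infty}c_i^{(N)}\cdots c_i^{(1)}c_i^{(0)}$ converges in $\A_c(U_i)$ then $c_i\Phi_{ij}c_j\inv$ is holomorphic modulo $\A^{(n)}$ for every $n$, hence lies in $Z^1(Q,\A)$ by (iii), which proves the theorem. Convergence is not immediate, because the order-$n$ correction need not be small on a prescribed compact set; the remedy is the classical device of Grauert's proof, namely to interleave the filtration induction with a bumping procedure over a compact exhaustion $Q=\bigcup_m K_m$, at each stage only fixing the cocycle in a neighbourhood of $K_m$ and choosing the correction — by a partition of unity on $Q$ for the abelian pieces, together with an Oka--Weil approximation of the holomorphic lift $\Psi_{ij}$ over a neighbourhood of $K_{m-1}$ where holomorphy has already been attained — to be uniformly close to the identity on $K_{m-1}$, so that on each compact subset of any $U_i$ only finitely many factors are non-negligible to a given precision. \textbf{The main obstacle} is precisely this convergence bookkeeping together with the construction of the filtration itself: one must exhibit the graded pieces as coherent $\O_Q$-modules in a manner simultaneously valid for holomorphic families and for strongly continuous ones, prove the completeness statement (iii), and then push the estimates through the double induction (on filtration degree and on the exhaustion) uniformly across both categories. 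Once the filtration and these estimates are in hand, the remaining steps — Cartan's Theorem~B, softness of sheaves of continuous sections, and the Oka principle for the linear part — combine routinely.
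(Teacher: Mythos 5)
Your proposal takes a genuinely different route from the paper's, and it rests on structural input that I don't believe is available in the generality required; the gap is in the filtration itself, not just in the convergence bookkeeping.

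The paper does not use a jet-type filtration of $\A$ and $\A_c$ at all. Instead, it defines an open set $U\subset Q$ to be \emph{good} if $\Phi_{ij}$ is cohomologous over $U$ to a holomorphic cocycle, proves (Lemma~\ref{lem:2opensetsgood}) that the union of two good sets with Runge intersection is good, and then runs a Cartan-style exhaustion. The analytic heart of Lemma~\ref{lem:2opensetsgood} is Theorem~\ref{thm:mainKLSOka}, i.e.\ the homotopy theorem \cite[Theorem~1.4]{KLSOka} that every strong $G$-homeomorphism $X\to Y$ over $\Id_Q$ is homotopic through strong $G$-homeomorphisms to a biholomorphic one, combined with Theorem~\ref{thm:A} and Corollary~\ref{cor:onetermholomorphic}. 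Nowhere is the problem linearized by passing to abelian graded pieces; the machinery of \cite{KLSOka} proceeds by flows of vector fields and homotopies, not by a filtration.

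The obstacle to your plan is that the filtration you posit — $\A^{(n)}$ cut out by ``order of agreement with the identity'' with $\A/\A^{(1)}$ a bundle of reductive groups over $Q$ and $\A^{(n)}/\A^{(n+1)}$ coherent $\O_Q$-modules for $n\geq 1$ — does not exist in the stated generality. The fiber $X_q$ and the group $\Aut(X_q)^G$ jump as $q$ moves across the Luna strata of $Q$, so $\A/\A^{(1)}$ is not the sheaf of sections of a group bundle (reductive or otherwise) but a sheaf whose stalk dimensions are only upper semicontinuous. Likewise, the candidate graded pieces $\M_n$ need not be coherent: the paper's Remark on the automorphism group scheme already points out that a well-behaved object of this kind is only known to exist when $p\colon X\to Q$ is flat, which is not assumed. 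Even if one could produce a filtration on a single Luna stratum, patching the resulting Oka-type statements across strata is exactly the difficulty that the strong-continuity framework of \cite{KLSOka} is designed to circumvent. Your condition (iii), completeness of the filtration in both the holomorphic and continuous categories simultaneously, is also far from clear and is not something \cite{KLSOka} provides. So the ``routine'' steps (Cartan B, softness, Grauert for the linear part) never get off the ground: the objects they would apply to are not there. The paper avoids all of this by invoking the homotopy theorem of \cite{KLSOka} as a black box, which is the key input you are missing.
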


As a consequence we have the following Oka principle:

\begin{corollary}\label{cor:main}
The canonical map $H^1(Q,\A)\to H^1(Q,\A_c)$ is a bijection.
\end{corollary}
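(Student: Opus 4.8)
The plan is to deduce Corollary~\ref{cor:main} directly from Theorems~\ref{thm:A} and~\ref{thm:B}, so the work reduces to an exercise in manipulating \v{C}ech cohomology of sheaves of (nonabelian) groups. Recall that $H^1(Q,\A)$ and $H^1(Q,\A_c)$ are defined as direct limits over open covers of the pointed sets $Z^1/\!\!\sim$ of cocycles modulo coboundaries, and that the inclusion $\A\hookrightarrow\A_c$ induces the canonical map $\iota\colon H^1(Q,\A)\to H^1(Q,\A_c)$ on these limits. Since every \v{C}ech $1$-cocycle (for $\A$ or for $\A_c$) is, by the refinement-and-passage-to-the-limit formalism, represented on some fixed cover $\{U_i\}$ of $Q$ to which both theorems apply, it suffices to prove injectivity and surjectivity of $\iota$ cover by cover and then pass to the limit.

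First I would prove \emph{surjectivity}. Let $\xi\in H^1(Q,\A_c)$ be represented by a cocycle $\Phi_{ij}\in Z^1(Q,\A_c)$ for some cover $\{U_i\}$; after refining we may assume the cover is one for which Theorem~\ref{thm:B} holds. That theorem produces $c_i\in\A_c(U_i)$ with $\Psi_{ij}:=c_i\Phi_{ij}c_j\inv\in Z^1(Q,\A)$. The family $\{c_i\}$ is exactly a $0$-cochain realizing a coboundary, so $\Psi_{ij}$ and $\Phi_{ij}$ define the same class in $H^1(Q,\A_c)$; hence $\xi=\iota([\Psi_{ij}])$ lies in the image of $\iota$.

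Next I would prove \emph{injectivity}. Suppose $\alpha,\beta\in H^1(Q,\A)$ satisfy $\iota(\alpha)=\iota(\beta)$. Choose a common cover $\{U_i\}$ (refined so that Theorem~\ref{thm:A} applies) and cocycles $\Phi_{ij},\Psi_{ij}\in Z^1(Q,\A)$ representing $\alpha,\beta$. The equality $\iota(\alpha)=\iota(\beta)$ in $H^1(Q,\A_c)$ means that, after possibly refining the cover once more, there are $c_i\in\A_c(U_i)$ with $\Phi_{ij}=c_i\Psi_{ij}c_j\inv$. Theorem~\ref{thm:A} then yields $c_i'\in\A(U_i)$ satisfying the same relation $\Phi_{ij}=c_i'\Psi_{ij}c_j'^{-1}$, so $\Phi_{ij}$ and $\Psi_{ij}$ are cohomologous over $\A$ and $\alpha=\beta$. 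Combining the two halves gives that $\iota$ is a bijection.

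The only genuine subtlety — and the step I would treat most carefully — is the bookkeeping of common refinements: a priori $\alpha$, $\beta$, and the $\A_c$-coboundary realizing $\iota(\alpha)=\iota(\beta)$ live on three different covers, and Theorems~\ref{thm:A} and~\ref{thm:B} are stated for a single fixed cover. One must check that cocycles and the $c_i$'s restrict compatibly to a common refinement $\{V_k\}$ and that both theorems still apply to $\{V_k\}$ (which they do, since the statements impose no restriction on the cover beyond its being an open cover of $Q$, the relevant Steinness/local-biholomorphy hypotheses being preserved under restriction). Granting this standard limit argument, no further ideas are needed; the corollary is a formal consequence of the two theorems.
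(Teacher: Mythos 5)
Your proof is correct and follows exactly the paper's own (very terse) argument: Theorem~\ref{thm:B} gives surjectivity of the map $H^1(Q,\A)\to H^1(Q,\A_c)$ and Theorem~\ref{thm:A} gives injectivity, with the passage over refinements being the only formal bookkeeping required. The paper merely states that the two theorems imply the corollary; you have filled in the same deduction in detail, including the (correct) observation that the theorems hold for an arbitrary open cover so common refinements pose no obstruction.
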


\begin{remark}
 Suppose that $X$ is a smooth affine $G$-variety and that $Z\to Q$ is a morphism of  affine varieties. Then $G$ acts on the fiber product $Z\times_Q X$ and we have the group  $\Aut_{Z,\alg}(Z\times_Q X)^G$ of algebraic $G$-automorphisms of $Z\times_Q X$ which induce the identity on the quotient $Z$.  A scheme $\G$ with projection $\pi\colon \G\to Q$ such that the fibers of $\G$ are groups whose structure depends algebraically on $q\in Q$ is called a \emph{group scheme over $Q$}. (See \cite[Ch,\ III]{KraftSchReductive} for a more precise definition.)\ We say that \emph{the automorphism group scheme of $X$ exists\/} if there is a group scheme $\G$ over $Q$ together with a canonical isomorphism of $\Gamma(Z,Z\times_Q \G)$ and $\Aut_{Z,\alg}(Z\times_Q X)^G$ for all $Z\to Q$. The automorphism group scheme of $X$ exists (and is an affine variety) if, for example, $p\colon X\to Q$ is flat \cite[Ch.\ III Proposition 2.2]{KraftSchReductive}. Assuming $\G$ exists, now consider $X$ as a Stein $G$-manifold and $\G$ as an analytic variety. Then for $U$ open in $Q$, $\A(U)\simeq\Gamma(U,\G)$ and one can show that $\A_c(U)$ is the set of continuous sections of $\G$ over $U$. Thus, in this case,  our theorems reduce to the precise analogues of Grauert's for the cohomology of $\G$ using holomorphic or continuous sections.  
   \end{remark}

For $U$ an open subset of $Q$ we have a topology on $\A_c(U)$ and $\A(U)$ and we  define the notion of a continuous path (or homotopy) in $\A_c(U)$ or $\A(U)$. We establish a result which is well-known in the case of principal bundles but rather non-trivial in our situation. 
\begin{theorem}\label{thm:C}
Let $\Phi_{ij}(t)$ be a homotopy of elements in $Z^1(Q,\A_c)$, $t\in[0,1]$. Then there are homotopies $c_i(t)\in\A_c(U_i)$, $t\in[0,1]$, such that $\Phi_{ij}(t)=c_i(t)\Phi_{ij}(0)c_j(t)\inv$.
Hence $\Phi_{ij}(t)\in H^1(Q,\A_c)$ is independent of $t$.
\end{theorem}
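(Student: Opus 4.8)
The plan is to prove Theorem~\ref{thm:C} by combining two ingredients: a \emph{local} homotopy-lifting statement for $\A_c$ over (relatively compact) open subsets of $Q$, and a \emph{globalization} argument over the cover $\{U_i\}$ that is parallel to the proof of Theorem~\ref{thm:B} but carried out with parameter $t\in[0,1]$. First I would reduce to the case where the cover $\{U_i\}$ is locally finite and each $U_i$ is relatively compact, and choose a shrinking $\{V_i\}$, $\overline{V_i}\subset U_i$, that still covers $Q$; all the estimates involved in strong continuity are uniform on such compacta. For a fixed pair $(i,j)$, set $\gamma_{ij}(t)=\Phi_{ij}(t)\Phi_{ij}(0)\inv\in\A_c(U_i\cap U_j)$; this is a homotopy of cocycle-coboundary corrections with $\gamma_{ij}(0)=\id$, and Theorem~\ref{thm:C} is exactly the assertion that this family of ``coboundaries-at-each-time'' can be trivialized by a time-dependent $0$-cochain $c_i(t)$, continuous in $(q,t)$, with $c_i(0)=\id$.

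The key step is a \emph{one-parameter Cartan-type splitting lemma}: given a homotopy $a(t)\in\A_c(U)$ with $a(0)=\id$, defined on a neighbourhood of $\overline{V'}\cup\overline{V''}$ where $V'\cup V''$ is a two-set cover, one can write $a(t)=b'(t)\, b''(t)$ with $b'(t)\in\A_c$ near $\overline{V'}$, $b''(t)\in\A_c$ near $\overline{V''}$, and $b'(0)=b''(0)=\id$, provided $a(t)$ stays in a small neighbourhood of $\id$ (which, by subdividing $[0,1]$ into finitely many subintervals $[t_k,t_{k+1}]$ using uniform continuity on the relevant compactum, we may always arrange). This is the parametrized analogue of the additive/multiplicative Cousin splitting that underlies Theorems~\ref{thm:A} and~\ref{thm:B}; I expect it to follow by running the same fixed-point/implicit-function scheme used there, but in the Banach space of continuous $\A_c$-valued families on $[t_k,t_{k+1}]$ rather than on a point, keeping track of the normalization at $t=t_k$. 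Concatenating over the subintervals (composing the resulting correction paths) removes the smallness restriction on a single pair.

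With the two-set splitting in hand, the globalization is an induction on the index set exactly as in the proof of Theorem~\ref{thm:B}: order the $U_i$, and at the $n$-th stage use the splitting lemma to absorb the discrepancy $\gamma_{in}(t)$ between the already-constructed partial solution on $U_1\cup\dots\cup U_{n-1}$ and $U_n$ into correction terms $c_i(t)$ (supported, after the usual shrinking, near $\overline{V_i}$) with $c_i(0)=\id$, so that the new cocycle $c_i(t)\Phi_{ij}(t)c_j(t)\inv$ agrees with $\Phi_{ij}(0)$ over the enlarged union; local finiteness guarantees the infinite composition of corrections is locally a finite product and hence again a continuous path in $\A_c$. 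At $t=1$ this yields the desired $c_i(1)$ with $\Phi_{ij}(1)=c_i(1)\Phi_{ij}(0)c_j(1)\inv$, and since the construction is through a homotopy the class $[\Phi_{ij}(t)]\in H^1(Q,\A_c)$ is constant.

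The main obstacle I anticipate is the parametrized splitting lemma, specifically ensuring that strong continuity in $q$ is preserved \emph{jointly} with continuity in $t$ after the fixed-point iteration and the composition over subintervals: the fiberwise automorphisms $X_q$ vary in a non-reduced, non-flat family, so one must argue that the Banach-space bounds controlling the iteration are locally uniform in $q$ over $\overline{V_i}$ \emph{and} that the constants do not degenerate as $q$ approaches the strata where the structure of $X_q$ jumps. This is precisely the technical heart already confronted in \cite{KLSOka} for the non-parametrized statements; the parametrized version should require only that those estimates were, in fact, locally uniform --- which I would verify by revisiting the relevant steps of \cite[Theorem~1.4 and Theorem~5.11]{KLSOka} and checking that the implicit-function arguments there go through with $[0,1]$ as an extra compact parameter space.
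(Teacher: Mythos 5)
Your approach is genuinely different from the paper's, and substantially longer; it also has a real gap in the globalization step.

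\textbf{What the paper does.} At the point where Theorem~\ref{thm:C} is proved, Theorems~\ref{thm:B} and~\ref{thm:mainAc} and Corollary~\ref{cor:onetermholomorphic} are already in hand, and the proof is a two-line combination of them. By Theorem~\ref{thm:B} there are $d_i\in\A_c(U_i)$ with $\Psi_{ij}:=d_i\inv\Phi_{ij}(0)d_j$ holomorphic; conjugating the whole homotopy by the $d_i$ reduces to the case that $\Phi_{ij}(0)$ is holomorphic. Then Corollary~\ref{cor:onetermholomorphic} applies: via the twist construction (Lemma~\ref{lem:reduceId}) one reduces further to $\Phi_{ij}(0)=\id$, and the statement becomes precisely $H^1(Q,\AA_c)=0$, which is Theorem~\ref{thm:mainAc}(3). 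Your sketch neither uses Theorem~\ref{thm:B} nor the twist; in effect you are re-proving Theorem~\ref{thm:mainAc}(3) from scratch instead of invoking it.

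\textbf{The gap.} Your globalization step asserts that ``local finiteness guarantees the infinite composition of corrections is locally a finite product.'' That would be true if each correction stage produced cochains that are the identity outside a compact set, but the two-set Cousin splitting $a(t)=b'(t)b''(t)$ does not deliver compactly supported $b'$ or $b''$ — a section of $\A_c$ over an open set cannot in general be cut off to equal the identity outside a compact set while remaining a splitting of $a$. So the infinite iteration does \emph{not} become locally finite, and convergence must be forced by hand. The paper's proof of $H^1(Q,\AA_c)=0$ does exactly this, via a Cartan exhaustion $K_1\subset V_2\subset K_2\subset\cdots$ together with the density of $\AA_c(Q)$ in $\AA_c(U)$ (Theorem~\ref{thm:mainAc}(2), which in turn uses the logarithm Theorem~\ref{thm:log} and cutoff functions on the Lie algebra side) and completeness of $\A_c$ (Corollary~\ref{cor:Ac_complete}). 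Without this approximation-and-limit mechanism, your induction has no convergence argument.

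\textbf{A smaller issue.} You take as the object to be trivialized the family $\gamma_{ij}(t)=\Phi_{ij}(t)\Phi_{ij}(0)\inv$, which is not a cocycle, and you do not check that the ``discrepancy'' at the $n$-th inductive stage (your candidate $c_n(t)=\Phi_{in}(t)\inv c_i(t)\Phi_{in}(0)$, for $i<n$) is well defined, i.e.\ independent of $i$. This does hold — a short computation using the cocycle identity for $\Phi_{ij}(t)$ and $\Phi_{ij}(0)$ and the inductive hypothesis shows it — but it is not automatic and must be recorded. The paper avoids the bookkeeping entirely by first making $\Phi_{ij}(0)$ holomorphic and then twisting to the identity cocycle, at which point the problem literally is $H^1(Q,\AA_c)=0$. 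I would recommend following that route: it reuses the already-proved machinery and eliminates both the well-definedness check and the delicate parametrized exhaustion argument you would otherwise have to rebuild.
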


 \begin{theorem}\label{thm:D}
Let $\Phi_{ij}(t)\in Z^1(Q,\A_c)$ be a homotopy, $t\in[0,1]$, where the $\Phi_{ij}(0)$ and $\Phi_{ij}(1)$ are holomorphic. Then there is a homotopy   $\Psi_{ij}(t)\in Z^1(Q,\A)$ with $\Psi_{ij}(0)=\Phi_{ij}(0)$ and $\Psi_{ij}(1)=\Phi_{ij}(1)$.
\end{theorem}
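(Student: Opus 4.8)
The plan is to deduce Theorem~\ref{thm:D} by combining Theorem~\ref{thm:C} with the Oka principle of Corollary~\ref{cor:main} (equivalently, Theorems~\ref{thm:A} and \ref{thm:B}), using the homotopy as a continuous ``bridge'' whose endpoints are holomorphic. First I would apply Theorem~\ref{thm:C} to the given homotopy $\Phi_{ij}(t)$: this produces homotopies $c_i(t)\in\A_c(U_i)$, $t\in[0,1]$, with $\Phi_{ij}(t)=c_i(t)\Phi_{ij}(0)c_j(t)\inv$, and in particular $\Phi_{ij}(1)=c_i(1)\Phi_{ij}(0)c_j(1)\inv$. So the endpoint cocycles $\Phi_{ij}(0)$ and $\Phi_{ij}(1)$ — both holomorphic by hypothesis — are cohomologous via the \emph{continuous} cochain $c_i:=c_i(1)\in\A_c(U_i)$. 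By Theorem~\ref{thm:A} (applied with $\Phi_{ij}(1)$ and $\Phi_{ij}(0)$ in place of $\Phi_{ij},\Psi_{ij}$), there exist \emph{holomorphic} $c_i'\in\A(U_i)$ with $\Phi_{ij}(1)=c_i'\,\Phi_{ij}(0)\,(c_j')\inv$.

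Next I would build the desired holomorphic homotopy $\Psi_{ij}(t)$ in two stages, or rather in one stage by correcting the continuous homotopy with a path of holomorphic cochains. The naive guess $\Psi_{ij}(t):=c_i(t)\Phi_{ij}(0)c_j(t)\inv = \Phi_{ij}(t)$ fails because the $c_i(t)$ are only continuous, so $\Phi_{ij}(t)$ is in general not holomorphic for intermediate $t$. The fix is to replace the continuous path $t\mapsto c_i(t)$ in $\A_c(U_i)$ by a path $t\mapsto d_i(t)\in\A(U_i)$ of holomorphic sections with the correct endpoints, namely $d_i(0)=\id$ and $d_i(1)=c_i'$, and then set $\Psi_{ij}(t):=d_i(t)\,\Phi_{ij}(0)\,d_j(t)\inv$. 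This is automatically holomorphic and a cocycle, has $\Psi_{ij}(0)=\Phi_{ij}(0)$ and $\Psi_{ij}(1)=c_i'\Phi_{ij}(0)(c_j')\inv=\Phi_{ij}(1)$, and is continuous in $t$; so it is the required homotopy in $Z^1(Q,\A)$. The construction of such a $d_i(t)$ is where one must work: one needs a continuous path in the (holomorphic) automorphism sheaf joining $\id$ to a prescribed element $c_i'$, refining the cover if necessary so that each $U_i$ is sufficiently small (e.g. Stein and contractible, or a Runge domain over which $X_{U_i}$ is biholomorphic to a product) that $\A(U_i)$ is path-connected, or at least that $c_i'$ lies in the identity path-component.

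The main obstacle is precisely this last point: ensuring the holomorphic cochain $c_i'$ obtained from Theorem~\ref{thm:A} can be joined to the identity \emph{holomorphically in $q$ and continuously in the path parameter}, and doing so compatibly enough that the resulting $\Psi_{ij}(t)$ is jointly continuous in $(q,t)$ and genuinely lands in $\A$ for every $t$. Here I would again invoke the Oka machinery: apply Theorem~\ref{thm:A} not just at the endpoint but along the homotopy, or rather run the argument for the homotopy-parametrized family by treating $Q\times[0,1]$ in place of $Q$. Concretely, the continuous homotopy $c_i(t)$ together with the two holomorphic endpoints gives a continuous cochain on $Q\times[0,1]$ that is holomorphic (in the $Q$-direction) over $Q\times\{0,1\}$; a relative version of Theorem~\ref{thm:A} — deforming a continuous trivializing cochain to a holomorphic one while keeping it fixed where it is already holomorphic — then yields holomorphic $d_i(t)$ with $d_i(0)=\id$, $d_i(1)=c_i'$, and $\Psi_{ij}(t)=d_i(t)\Phi_{ij}(0)d_j(t)\inv$ the sought homotopy. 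In other words, Theorem~\ref{thm:D} is the ``relative'' or ``parametrized'' form of Theorem~\ref{thm:A}, and the proof is a matter of checking that the proof of Theorem~\ref{thm:A} (via \cite{KLSOka}) is sufficiently natural to carry a homotopy parameter and a boundary condition along for free, which is the standard pattern in Oka theory.
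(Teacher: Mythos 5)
Your overall shape of argument matches the paper's: reduce to a continuous cochain splitting the homotopy, arrange the endpoint to be holomorphic, and then replace the continuous path of cochains by a holomorphic one with the same endpoints, so that $\Psi_{ij}(t)=d_i(t)\Phi_{ij}(0)d_j(t)\inv$ does the job. (A small remark on logical order: you invoke Theorem~\ref{thm:C}, which the paper derives from Theorem~\ref{thm:B} and thus proves \emph{after} Theorem~\ref{thm:D}; there is no actual circularity since the proof of Theorem~\ref{thm:B} does not use Theorem~\ref{thm:D}, but you only need Corollary~\ref{cor:onetermholomorphic}, which is proved earlier and is what the paper uses.)

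The substantive issue is the step you yourself flag as ``where one must work'': producing holomorphic paths $d_i(t)\in\A(U_i)$ with $d_i(0)=\id$ and $d_i(1)=c_i'$. Two things go wrong with the sketch you give. First, the continuous cochain $c_i(t)$ from Corollary~\ref{cor:onetermholomorphic} is \emph{not} holomorphic over $Q\times\{1\}$: $c_i(1)\in\A_c(U_i)$, and the holomorphic $c_i'\in\A(U_i)$ obtained from Theorem~\ref{thm:A} will in general differ from $c_i(1)$, so your ``continuous cochain on $Q\times[0,1]$ holomorphic over $Q\times\{0,1\}$'' is not what you actually have in hand. The paper fixes exactly this point before invoking a path-straightening lemma: by Theorem~\ref{thm:mainKLSOka}, the strong $G$-homeomorphism defined by the $c_i(1)$ is homotopic to a holomorphic one splitting $\Phi_{ij}(1)$, and concatenating that homotopy onto the $c_i(t)$ (then reparametrizing) makes $c_i(1)$ genuinely holomorphic. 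Second, the ``relative version of Theorem~\ref{thm:A} on $Q\times[0,1]$'' that you want is a real lemma that has to be proved, not a formal parametrized restatement of Theorem~\ref{thm:A}; the paper isolates it as Lemma~\ref{lem:holomhomotopy}, and its proof is not a routine relativization --- it thickens $[0,1]$ to a disk $\Delta$ to get a genuine Stein quotient $\Delta\times Q$, applies \cite[Theorem 8.7]{KLSOka} and \cite[Theorem 10.1]{KLSOka}, and uses the boundary-preservation built into those theorems. So your proposal has the right architecture but leaves precisely the load-bearing lemma unproved, and as stated the homotopy you feed into it does not satisfy the needed boundary condition at $t=1$.
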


Here is an outline of this paper. In \S \ref{sec:background} we recall Luna's slice theorem and related results. In \S \ref{sec:stronglycontinuous} we define the sheaf of groups $\A_c$  as well as  a corresponding sheaf of Lie algebras $\LAc$.     In \S \ref{sec:logarithms} we show that sections of $\A_c$ sufficiently close to the identity are the exponentials of   sections of $\LA_c$.  In \S \ref{sec:homotopiesAc} we establish our main technical result (Theorem \ref{thm:mainAc}) about homotopies in $\A_c$. We prove Theorem \ref{thm:A} and Theorem \ref{thm:D} as well as a preliminary version of Theorem \ref{thm:C}. In \S \ref{sec:proveB} we establish Theorem \ref{thm:B} and use it to prove Theorem \ref{thm:C}. Finally, let $X$ and $Y$ be locally $G$-biholomorphic over $Q$. We establish a theorem giving necessary and sufficient conditions for a $G$-biholomorphism from $X_U\to Y_U$ over $\Id_U$, where $U\subset Q$ is Runge, to be  the limit of the restrictions  to $X_U$ of  $G$-biholomorphisms from $X$ to $Y$ over $\Id_Q$. 
   \begin{remark}
  In \cite{KLS, KLSOka} we also  consider  $G$-diffeomorphisms $\Phi$ of $X$   which induce the identity over $Q$ and are \emph{strict\/}. This means that   the restriction of $\Phi$ to $X_q$, $q\in Q$, induces an algebraic  $G$-automorphism of    $(X_q)_\red$  where ``$\red$'' denotes reduced structure. One can adapt the techniques developed here to prove the analogues of our main theorems for strong $G$-homeomorphisms replaced by strict $G$-diffeomorphisms.
\end{remark}

  \smallskip\noindent
\textit{Acknowledgement.}  I thank F.~Kutzschebauch and F.~L\'arusson for our collaboration on \cite{KLS} and \cite{KLSOka} which led to this paper. 

\section{Background}  \label{sec:background}

For details of what follows see \cite{Luna} and \cite[Section~6]{Snow}.  Let $X$ be a   Stein manifold with a holomorphic action of a reductive complex Lie group $G$.  The categorical quotient $Q_X=X\sl G$ of $X$ by the action of $G$ is the set of closed orbits in $X$ with a reduced Stein structure that makes the quotient map $p_X\colon X\to Q_X$ the universal $G$-invariant holomorphic map from $X$ to a Stein space. The quotient $Q_X$ is normal. When $X$ is understood, we drop the subscript $X$ in $p_X$ and $Q_X$.      If $U$ is an open subset of $Q$, then 
$p^*$ induces isomorphisms of $\C$-algebras $\O_X(X_U)^G \simeq \O_Q(U)$  and $\CC(X_U)^G\simeq\CC(U)$. We say that a subset of $X$ is \textit{$G$-saturated\/} if it is a union of fibers of $p$. If $X$ is an affine  $G$-variety, then $Q$ is just the complex space corresponding to the affine algebraic variety with coordinate ring  $\O_\mathrm{alg}(X)^G$.

 Let $H$ be a reductive subgroup of $G$ and let $B$ be an $H$-saturated neighborhood of the origin of an $H$-module $W$. We always assume   that $B$ is Stein, in which case $B\sl H$ is also Stein. Let $G\times^HB$ (or $T_B$) denote the quotient of $G\times B$ by the (free) $H$-action sending $(g,w)$ to $(gh\inv,hw)$ for $h\in H$, $g\in G$ and $w\in B$. We denote the image of $(g,w)$ in $G\times^HB$ by $[g,w]$.  
 
  Let $Gx$ be a closed orbit in $X$. Then the   isotropy group  $G_x$ is reductive and the \emph{slice representation at $x$\/}   is the action of $H=G_x$ on $W=T_xX/T_x(Gx)$.  By the slice theorem, there is a $G$-saturated neighborhood of $Gx$ which is $G$-biholomorphic to $T_B$ where $B$ is an $H$-saturated neighborhood of $0\in W$.

  \section{Strongly continuous homeomorphisms and vector fields}\label{sec:stronglycontinuous}
  
  The group $G$ acts on $\O(X)$, $f\mapsto g\cdot f$, where $(g\cdot f)(x)=f(g\inv x)$, $x\in X$, $g\in G$, $f\in\O(X)$.
Let $\O_\gf(X)$ denote the set of holomorphic functions  $f$ such that the span of $\{g\cdot f\mid g\in G\}$ is finite dimensional. They are called the  \emph{$G$-finite  holomorphic functions on $X$} and obviously form an $\O(Q)=\O(X)^G$-algebra.  If $X$ is a smooth affine $G$-variety, then the techniques of \cite[Proposition 6.8, Corollary 6.9]{Schwarz1980} show that for $U\subset Q$ open and Stein we have
$$
\O_\gf(X_U)\simeq \O(U)\otimes_{\O_\alg(Q)}\O_\alg(X).
$$
Let $V$ be the direct sum of pairwise non-isomorphic non-trivial $G$-modules $V_1,\dots,V_r$. Let $\O(X)_V$ denote the elements of $\O_\gf(X)$ contained in a copy of $V$.  If $H$ is a reductive subgroup of $G$ and $W$ an $H$-module, we similarly define $\O_\alg(T_W)_V$. 
Then for $B$ an $H$-saturated neighborhood of $0\in W$, $\O_\alg(T_W)_V$ generates $\O(T_B)_V$ over $\O(B)^H$. By Nakayama's Lemma, $f_1,\dots,f_m\in\O(X)_V$ restrict to minimal   generators of the $\O(U)$-module $\O(X_U)_V$   for some neighborhood $U$ of $q\in Q$  if and only if the restrictions of the $f_i$ to $X_q$ form a basis of  $\O(X_q)_V=\O_\alg(X_q)_V$. Thus by the slice theorem,  the sheaf of algebras of $G$-finite holomorphic functions is locally finitely generated as an algebra over $\O_Q$. 

  \begin{definition}\label{def:standard-generators} Let $U\subset Q$ be relatively compact. Then there is a $V$  as above such that  the  $\O(X_U)_{V_j}$  are finitely generated over $\O(U)$ and generate $\O_\gf(X_U)$ as $\O(U)$-algebra.   Let $f_1,\dots,f_n$ be a   generating set of $\oplus\O(X_U)_{V_j}$ with each $f_i$ in some $\O(X_U)_{V_j}$. Then we call   $\{f_i\}$ a \emph{standard generating set    of $\O_\gf(X_U)$}. When $U=T_B\sl G$  as before, we always assume that our standard generators are the restrictions of homogeneous elements of  $\O_\alg(T_W)$.
  \end{definition}

 Let $U\subset Q$, $V$ and $\{f_1,\dots,f_n\}$ be as above.  We say that a $G$-equivariant homeomorphism  
 $\Psi:X_U\to X_U$ is \emph{strong} if it lies over the identity of $U$  and   $\Psi^*f_i=\sum_{j} a_{ij}f_j$   where  the $a_{ij}$  are in $\CC(X_U)^G\simeq\CC(U)$. We also require that the $a_{ij}(q)$ induce a  $G$-isomorphism of $\O(X_q)_V$ for all $q\in U$.  Then  $\Psi$ induces an algebraic isomorphism $\Psi_q\colon X_q\to X_q$  for all $q\in U$.  It is easy to see that the definition does not depend on our choice of $V$ and the generators $f_i$. We call $(a_{ij})$ a \emph{matrix associated to $\Psi$\/}. Using a partition of unity on $U$ it is clear that $\Psi$ is strong if and only if it is strong in a neighborhood of every $q\in Q$. In a neighborhood of any particular $q$, we may assume that the $f_i$ restrict to a basis of $\O(X_q)_V$, in which case $(a_{ij})$ is invertible in a neighborhood of $q$. Then $\Phi\inv$ has matrix $(a_{ij})\inv$ near $q$. Thus if $\Phi$ is strong, so is $\Phi\inv$. Let $\A_c(U)$ denote the group of strong $G$-homeomorphisms of $X_U$  for $U$ open in $Q$. Then $\A_c$ is a sheaf of groups on $Q$.

 We say that a vector field $D$ on $X_U$ is \emph{formally holomorphic\/} if it annihilates the antiholomorphic functions on $X_U$. Let $D$ be a continuous formally holomorphic vector field on $X_U$, $G$-invariant, annihilating $\O(X_U)^G$.  We say that \emph{$D$ is strongly continuous\/} (and write $D\in\LAc(U)$) if for any $q\in U$ there is a neighborhood $U'$ of $q$ in $U$ and a standard generating set $f_1,\dots,f_n$ for $\O_\gf(X_{U'})$ such that $D(f_i)=\sum d_{ij}f_j$ where the $d_{ij}$ are in $\CC(U')$. We say that \emph{$D$ has matrix $(d_{ij})$ over $U'$\/}. The matrix   is usually not unique. Clearly our definition of $\LAc(U)$ is independent of the choices made. We denote the corresponding sheaf by $\LAc$. 
 
 \begin{remark}\label{rem:Dcomplete} Let $D\in\LAc(U)$ and $q\in Q$. Then $D$ is tangent to $F=X_q$ and acts algebraically on $\O_\alg(F)$, hence lies in the space of $G$-invariant derivations $\Der_\alg(F)^G$ of $\O_\alg(F)$. Since $\Der_\alg(F)^G$ is the Lie algebra of the algebraic group $\Aut(F)^G$, the restriction of $D$ to $F$  can be integrated for all time. It follows that $D$ is a complete vector field.  
 \end{remark}

Let $U$ be   open in $Q$, let $\epsilon>0$ and let $K$ be a compact subset of $U$. Let ${\mathbf f}=\{f_1,\dots,f_n\}$ be a standard generating set of $\O_\gf(X_{U'})$ where $U'$ is a neighborhood of $K$.  Define
$$
\Omega_{K,\epsilon,\mathbf f}=\{\Phi\in\A_c(U): ||(a_{ij})-I||_K<\epsilon\}
$$ 
where $(a_{ij})$ is some matrix associated to $\Phi$. Here $||(a_{ij})-I||_K$ denotes the supremum of the matrix norm of $(a_{ij})-I$ over $K$. Let $\mathbf f'=\{f_1',\dots,f_m'\}$ be another standard generating set defined on a neighborhood of $K$ in $U$.

\begin{lemma}\label{lem:topology-well-defined}
Let $\epsilon'>0$. Then there is an $\epsilon>0$ such that $\Omega_{K,\epsilon,\mathbf f}\subset\Omega_{K,\epsilon',\mathbf f'}$.
\end{lemma}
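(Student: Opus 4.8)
The plan is to show that membership in $\Omega_{K,\epsilon',\mathbf f'}$ is controlled by membership in $\Omega_{K,\epsilon,\mathbf f}$ by comparing the two associated matrices via a \emph{change of basis matrix} that is bounded on $K$. First I would observe that both generating sets live on some common neighborhood $U''$ of $K$ (shrink if necessary), and that since $\{f_i\}$ generates $\oplus\O(X_{U''})_{V_j}$ over $\O(U'')$, each $f_\ell'$ can be written $f_\ell' = \sum_i b_{\ell i} f_i$ with $b_{\ell i}\in\O(U'')$; similarly $f_i = \sum_\ell c_{i\ell} f_\ell'$ with $c_{i\ell}\in\O(U'')$. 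Here one must be slightly careful: the $V_j$-components must be matched up, so $b_{\ell i}$ and $c_{i\ell}$ are zero unless $f_i$ and $f_\ell'$ lie in the same isotypic piece $\O(X)_{V_j}$; within each piece the relations are genuine. Set $B=(b_{\ell i})$, $C=(c_{i\ell})$, both with entries in $\O(U'')$, hence bounded on the compact set $K$.

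Next I would compute how the matrix transforms. If $\Phi\in\A_c(U)$ has associated matrix $(a_{ij})$ for $\mathbf f$, then from $\Phi^* f_\ell' = \sum_i b_{\ell i}\,\Phi^* f_i = \sum_{i,j} b_{\ell i} a_{ij} f_j = \sum_{j,k} b_{\ell i} a_{ij} c_{jk} f_k'$, one reads off that $(a'_{\ell k}) = B\,(a_{ij})\,C$ is a matrix associated to $\Phi$ relative to $\mathbf f'$ — using here that $\Phi^*$ is $\O(U'')$-linear because $\Phi$ lies over the identity. Now the key identity is that $B\,C = I$ \emph{as an endomorphism of the isotypic pieces}: indeed $f_\ell' = \sum_i b_{\ell i} f_i = \sum_{i,k} b_{\ell i} c_{ik} f_k'$, and while the $f_k'$ need not be $\O(U'')$-independent globally, we only need the consequence that $B(a-I)C$ is the relevant quantity. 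More cleanly: $(a') - I' = B a C - I'$, and since taking $\Phi = \id$ gives $a = I$ and $a' = I'$, we get $B I C = I'$ (where $I'$ is an admissible matrix for the identity w.r.t. $\mathbf f'$, and we may simply take $I' = BC$). Hence $(a') - I' = B(a - I)C$, so $\|(a')-I'\|_K \le \|B\|_K\,\|(a)-I\|_K\,\|C\|_K$.

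Therefore, given $\epsilon'>0$, it suffices to choose $\epsilon = \epsilon'/(\|B\|_K\|C\|_K + 1)$, which is a strictly positive real number since $B$ and $C$ are holomorphic on a neighborhood of the compact set $K$ and hence have finite sup-norm on $K$. Then for any $\Phi\in\Omega_{K,\epsilon,\mathbf f}$ the estimate above gives $\|(a')-I'\|_K < \epsilon'$, so $\Phi\in\Omega_{K,\epsilon',\mathbf f'}$, as desired. The main obstacle, and the point requiring the most care, is the bookkeeping in the previous paragraph: matrices associated to $\Phi$ are not unique (the $f_i$ need not be globally $\O(U'')$-independent, only fiberwise near a generic point), so one has to argue that \emph{some} admissible choice of $(a'_{\ell k})$ arises as $B(a_{ij})C$ up to the ambiguity, and that the identity matrix $I'$ can be taken to be $BC$ so that the difference is exactly $B(a-I)C$. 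Once that is pinned down, the norm estimate is immediate and the lemma follows; by symmetry (swapping the roles of $\mathbf f$ and $\mathbf f'$) one also gets the reverse inclusion for a suitable $\epsilon$, so the two systems of sets $\{\Omega_{K,\epsilon,\mathbf f}\}$ and $\{\Omega_{K,\epsilon,\mathbf f'}\}$ define the same filter of neighborhoods of the identity, confirming that the topology on $\A_c(U)$ is well-defined independently of the chosen standard generating set.
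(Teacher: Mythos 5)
The key difficulty your proposal misses is that two standard generating sets $\mathbf f$ and $\mathbf f'$ need not correspond to the same collection of irreducibles $V$. The definition only requires that $\{f_i\}$ generates $\oplus\O(X_U)_{V_j}$ as an $\O(U)$-\emph{module} for its own chosen $V$ and that these then generate $\O_\gf(X_U)$ as an $\O(U)$-\emph{algebra}. A second standard generating set $\mathbf f'$ may use a different $V'$, so the $f_\ell'$ generally lie in isotypic pieces $\O(X_U)_{V'_j}$ that are not among the $\O(X_U)_{V_j}$ spanned (over $\O(U)$) by $\mathbf f$. In that case there are no $b_{\ell i}\in\O(U'')$ with $f_\ell'=\sum_i b_{\ell i} f_i$; the correct relation is only polynomial, $f_\ell'=h_\ell(f_1,\dots,f_n)$ with coefficients in $\O(U)$. (Concretely, for $G=\C^*$ on $\C^2$ with weights $\pm1$, take $\mathbf f=\{x,y\}$ with $V$ of weights $\pm1$ and $\mathbf f'=\{x,y,x^2,y^2\}$ with $V'$ of weights $\pm1,\pm2$; then $x^2$ is quadratic, not linear, in $\mathbf f$.) You in fact half-notice this issue when you restrict $b_{\ell i}$ to vanish outside matching isotypic pieces, but if $V'_j\notin\{V_1,\dots,V_r\}$ there are no matching $f_i$ at all.

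Because of this, the clean change-of-basis estimate $\|(a')-I'\|_K\le\|B\|_K\|(a)-I\|_K\|C\|_K$ does not apply. The paper's proof instead expands $h_\ell(\Phi^*f_1,\dots,\Phi^*f_n)-h_\ell(f_1,\dots,f_n)$ as a sum of terms each containing at least one factor of $b_{kl}=a_{kl}-\delta_{kl}$, projects the remaining polynomial factors onto the appropriate isotypic piece to re-express them in terms of $f'_j$, and then bounds the resulting polynomial coefficients on $K$. To repair your argument you would need to replace the linear matrices $B,C$ by this polynomial Taylor-type expansion; your estimate is only valid in the special case where $\mathbf f$ and $\mathbf f'$ generate the same $\oplus\O(X_U)_{V_j}$ as an $\O(U)$-module.
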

\begin{proof} We may assume that the $f_i$ and $f_j'$ are standard generating sets of $\O_\gf(U)$.
There are polynomials $h_i$   with coefficients in $\O(U)$ such that   $f_i'=h_i(f_1,\dots,f_n)$, $1\leq i\leq m$. We may assume that $\{f_1',\dots,f_s'\}$ are the $f'_i$ corresponding to an irreducible $G$-module $V_t$. Let $\Phi\in\Omega_{K,\epsilon,\mathbf f}$ with corresponding matrix $(a_{uv})$ such that $||(b_{uv})||_K<\epsilon$ where $(b_{uv})=(a_{uv})-I$. Let $r_i$ be the degree of $h_i$. Then for $1\leq i\leq s$ we have
$$
(\Phi^*f_i')-f_i'=h_i(\Phi^*f_1,\dots,\Phi^*f_n)-h_i(f_1,\dots,f_n)=\sum_{k,l=1}^n   b_{kl}p_{kl}M_{kl}(f_1,\dots,f_n)
$$
where the $p_{kl}$ are polynomials in the $a_{uv}$ of degree at most $r_i-1$ and the $M_{k,l}$ are polynomials in the $f_j$ with coefficients in $\O(U)$ which are independent of the $a_{uv}$ and $b_{uv}$. Since $\Phi^*f_i'$ is a covariant corresponding to $V_t$, we can project the $M_{kl}$ to $\O(X_U)_{V_t}$ in which case we get   $\sum_{j=1}^s N_{jkl}f_j'$ where the $N_{jkl}$ are in $\O(U)$ and independent of the $a_{uv}$ and $b_{uv}$. Hence
$$
(\Phi^*f_i')-f_i'=\sum_{j=1}^s \sum_{k,l=1}^n b_{kl}N_{jkl}p_{kl}f_j'.
$$
Since the $N_{jkl}p_{kl}$ are bounded on $K$, choosing $\epsilon$ sufficiently small, we can force the terms $\sum_{k,l=1}^n b_{kl}N_{jkl}p_{kl}$ to be close to 0. Hence there is an $\epsilon>0$ such that $\Omega_{K,\epsilon,\mathbf f}\subset\Omega_{K,\epsilon',\mathbf f'}$.
\end{proof}

By the lemma, we get the same neighborhoods of the identity in $\A_c(U)$ from any standard generating set of $\O_\gf(X_{U'})$ where $U'$ is a neighborhood of $K$. Thus we can talk about neighborhoods of the identity without specifying the $\mathbf f$ in question. We then have a well-defined topology on $\A_c(U)$ where $\Phi$ is close to $\Phi'$ if $\Phi'\Phi\inv$ is close to the identity.

Let $U$, $K$ and the $f_i$ be as above. Define
 $$
 \Omega'_{K,\epsilon,\mathbf f}=\{D\in\LAc(U)\mid D(f_i)=\sum d_{ij}f_j\text{ and }  ||(d_{ij})||_K<\epsilon\}
 $$
 where $D$ has  (continuous) matrix $(d_{ij})$   defined on a neighborhood of $K$.
  As before, the $\Omega'_{K,\epsilon,\mathbf f}$ give a basis of neighborhoods of $0$  and define a topology on $\LAc(U)$, independent of the choice of  $\mathbf f$.

\begin{proposition}\label{prop:Dbasics} Let $U$ be open in $Q$ and let  $\{f_1,\dots,f_n\}$ be a standard generating set for $\O_\gf(U)$.
\begin{enumerate}
\item Let $D$ be a $G$-invariant formally holomorphic vector field on $X_U$ which annihilates $\O(U)$ such that $D(f_i)=\sum d_{ij}f_j$ for $d_{ij}\in\CC(U)$. Then $D$ is continuous, i.e., $D\in\LAc(U)$.
\item  $\LAc$ is a sheaf of Lie algebras and a module over the sheaf of germs of continuous functions on $Q$.
\item $\exp\colon \LAc(U)\to\A_c(U)$ is continuous.
\item $\LAc(U)$ is a Fr\'echet space.
\end{enumerate}
\end{proposition}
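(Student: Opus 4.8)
The plan is to treat part~(1) as the cornerstone: once we know that a $G$-invariant, formally holomorphic vector field on $X_U$ which annihilates $\O(U)$ and has a continuous matrix on a standard generating set is automatically a continuous vector field (hence lies in $\LAc(U)$), parts~(2), (3) and~(4) become verifications combining routine bookkeeping with fibrewise computations and the slice theorem. For~(1) itself, continuity of $D$ is a local question on $X_U$, so by the slice theorem I may assume $X_U\cong T_B$ with $B$ an $H$-saturated Stein neighborhood of $0$ in an $H$-module $W$ and $U=T_B\sl G$. Since $T_W=G\times^HW$ is a smooth affine variety, near any point of $T_B$ the restrictions of suitable regular functions on $T_W$ form a holomorphic coordinate system; these functions are $G$-finite, hence lie in $\O_\gf(X_U)$, which by the discussion preceding Definition~\ref{def:standard-generators} is generated over $\O(U)$ by the given standard generating set $\{f_i\}$. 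Writing such a coordinate as an $\O(U)$-polynomial in the $f_j$ and applying $D$ by Leibniz' rule, the relations $D(\O(U))=0$ and $D(f_i)=\sum_j d_{ij}f_j$ with $d_{ij}\in\CC(U)$ exhibit $D$ of each coordinate as a continuous function; so $D$ has continuous components in holomorphic charts, i.e., is a continuous vector field. That $D$ then also has a continuous matrix near every point with respect to \emph{any} standard generating set follows from the isotypic-projection bookkeeping in the proof of Lemma~\ref{lem:topology-well-defined}. Hence $D\in\LAc(U)$.

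For~(2) I pass to the fibres. If $D_1,D_2\in\LAc(U)$ have matrices $(d^1_{ij})$, $(d^2_{ij})$ on a common standard generating set over $U'$, then $[D_1,D_2]$ is again formally holomorphic, $G$-invariant and annihilates $\O(U)$, and by Remark~\ref{rem:Dcomplete} its restriction to each fibre $X_q$ is the algebraic commutator $[D_1|_{X_q},D_2|_{X_q}]$, whose effect on the generators $f_i|_{X_q}$ of $\O(X_q)_V$ is given by the matrix commutator of $(d^1_{ij}(q))$ and $(d^2_{ij}(q))$; this matrix is continuous in $q$, so part~(1) gives $[D_1,D_2]\in\LAc(U)$. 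Likewise $(p^*g)D$ has matrix $(g\,d_{ij})$ for $g$ a germ of a continuous function on $Q$, so $\LAc$ is a sheaf of Lie algebras and a module over the sheaf of germs of continuous functions. For~(3), every $D\in\LAc(U)$ is complete (Remark~\ref{rem:Dcomplete}), so its time-one flow $\exp(D)$ is a $G$-equivariant homeomorphism of $X_U$ over $\Id_U$; on the finite dimensional, $D$-invariant space $\O(X_q)_V$ the flow acts by the operator exponential, hence on the $f_i|_{X_q}$ by the ordinary matrix exponential $\exp\bigl((d_{ij}(q))\bigr)$, which is continuous in $q$ and invertible. Thus $\exp(D)\in\A_c(U)$, with associated matrix $\exp\bigl((d_{ij})\bigr)$. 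Since $\|\exp((d_{ij}))-I\|_K\le e^{\|(d_{ij})\|_K}-1$, the map $\exp$ is continuous at $0$, and continuity at an arbitrary point follows the same way from continuity of the matrix exponential.

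For~(4), the topology of $\LAc(U)$ is given by the seminorms $p_{K,\mathbf f}(D)=\inf_{(d_{ij})}\|(d_{ij})\|_K$, the infimum over matrices of $D$ with respect to $\mathbf f$ on a neighborhood of $K$; by the comparison argument of Lemma~\ref{lem:topology-well-defined} and the second countability of $Q$, a countable subfamily of these suffices, so $\LAc(U)$ is a metrizable locally convex topological vector space. For completeness, let $(D_k)$ be Cauchy; fix a standard generating set $\mathbf f$ on $U'$ and a compact $K\subset U'$. The bound $\|(D_k-D_l)(f_i)\|_{X_K}\le C_{K,\mathbf f}\|(d^k-d^l)\|_K$, valid for any matrices $(d^k)$, $(d^l)$ of $D_k$, $D_l$, turns the Cauchy condition into uniform convergence of the functions $D_k(f_i)$ on compacta of $X_{U'}$; extending over $\O_\gf(X_U)$ by Leibniz' rule and $\O(U)$-linearity yields a derivation $D_\infty$ of $\O_\gf(X_U)$ that is $G$-invariant and annihilates $\O(U)$. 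Passing to a subsequence and telescoping near-optimal matrices of the successive differences $D_k-D_{k-1}$ produces matrices of the $D_k$ with respect to $\mathbf f$ that converge uniformly on compacta of $U'$ to a continuous matrix of $D_\infty$; by (the proof of) part~(1), $D_\infty$ extends to an element of $\LAc(U)$, and $D_k\to D_\infty$. The step I expect to be the main obstacle is exactly this recurring passage from fibrewise algebraic data to a globally continuous object, complicated by the non-uniqueness of the matrices $(d_{ij})$ and by possible jumps in $\dim\O(X_q)_V$; the slice theorem, which reduces everything to the explicit model $T_B=G\times^HB$, together with the isotypic-projection bookkeeping of Lemma~\ref{lem:topology-well-defined}, is the device that makes each such passage go through.
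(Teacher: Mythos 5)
Your proof is correct and follows essentially the same approach as the paper's: part~(1) via local holomorphic coordinates built from the $G$-finite functions (the paper picks a subset of the $f_i$ together with invariants directly as coordinates, rather than expressing arbitrary coordinates as $\O(U)$-polynomials in the $f_i$, but the idea is the same), and parts~(2)--(4) via the same matrix computations and Cauchy-sequence argument. Your treatment of part~(4) is in fact a little more careful than the paper's, which merely writes ``we may assume'' the matrices of the $D_k$ are Cauchy without addressing their non-uniqueness; your telescoping of near-optimal matrices for the differences supplies the justification.
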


\begin{proof}
Let $D$ be as in (1) and let $x_0\in X_U$.   There is a subset, say $f_1,\dots,f_r$, of the $f_i$ and holomorphic invariant functions $h_{r+1},\dots,h_s$ such that the $z_i=f_i-f_i(x_0)$ and $z_j=h_j-h_j(x_0)$ are local holomorphic coordinates at $x_0$. Then, near $x_0$, $D$ has the form $\sum a_i\pt/\pt z_i$ where each $a_i=D(f_i)$ is continuous. Hence $D$ is continuous giving (1). Let $D$, $D'\in\LAc(U)$ with matrices $(d_{ij})$ and $(d'_{ij})$. Let $(e_{ij})$ be their matrix bracket. Then $[D,D']$ is $G$-invariant, annihilates $\O(U)$ and sends $f_i$ to $\sum e_{ij}f_j$. Hence  we have (2).  Part (3) is clear.

The topology on $\LAc(U)$, $U$ open in $Q$, is defined by countably many seminorms, hence $\LAc(U)$ is a metric space and it is Fr\'echet if it is complete.   Let $D_k$ be a Cauchy sequence in $\LAc(U)$.  Let $K\subset U$ be a compact neighborhood of $q\in U$. There are matrices $(d^k_{ij})$ of elements of $\CC(K)$ such that $D_k(f_i)=\sum d_{ij}^kf_j$ over $K$. Since $\{D_k\}$ is Cauchy, we may assume that $||(d^k_{ij})-(d^l_{ij})||_{K}<1/m$ for $k$, $l>N_m$, $m\in \N$.   Then $\lim_{k\to\infty}d^k_{ij}=d_{ij}\in\CC(K)$ for all $i$, $j$. It follows that the pointwise limit of the $D_k$ exists and is a formally holomorphic vector field $D$ annihilating the invariants such that  $D(f_i)=\sum d_{ij}f_j$.
By (1), $D$ is of type $\LAc$ over the interior of $K$. It follows that $\LF(U)$ is complete.                                                                                                                              
\end{proof}

 \section{Logarithms in $\A_c$}\label{sec:logarithms}
  Let $U$ be an open subset of $Q$ isomorphic to $T_B=G\times^HB$ where $H$ is a reductive subgroup of $G$ and $B$ is an $H$-saturated neighborhood of the origin in an $H$-module $W$. Let   $f_1,\dots,f_n$ be a standard generating set for $\O_\gf(X_U)^G$ consisting of the restrictions to $X_U$ of homogeneous polynomials in $\O_\alg(T_W)$. Consider polynomial relations of the $f_i$ with coefficients in $\O(U)$. These are generated by the  relations with coefficients in $\O_\alg(T_W)^G$. Let  $h_1,\dots,h_m$ be generating relations of this type. Let $N$ be a bound for the degree of the $h_j$. Now take the covariants which correspond to all the irreducible $G$-representations occurring in the span of the monomials of degree at most $N$ in the $f_i$. Let $\{f_\alpha\}$ be a set of generators for these covariants and let $K\subset U$ be compact. Let $\Phi\in\A_c(U)$.
Then $\Phi^*f_\alpha=\sum c_{\alpha,\beta}f_\beta$ where the $c_{\alpha,\beta}\in\CC(U)$. We also have that $\Phi^*f_i=\sum a_{ij}f_j$ where the $a_{ij}\in\CC(U)$. We fix      a neighborhood $\Omega$ of the identity in $\A_c(U)$ such that $\Phi\in\Omega$ implies that  $||(c_{\alpha,\beta})-I||_K<1/3$ and that $||(a_{ij})-I||_K<1/2$. For $\Phi\in\Omega$ let $\Lambda'$ denote $\Id-\Phi^*$. Then the formal power series $S(\Lambda')$ for $\log\Phi^*$ is   $-\Lambda'-(1/2)(\Lambda')^2-1/3(\Lambda')^3-\cdots$. 

Now we restrict to a fiber $F=X_q$,  $q\in K$. Let $M$ denote the span of the covariants $f_\alpha$ restricted to $F$. Then $M$ is finite dimensional and we give it the usual euclidian topology. Let $\Lambda$ denote the restriction of $\Lambda'$ to $M$.

\begin{lemma}\label{lem:converges}
Let $m\in M$. Then the series $S(\Lambda)(m)$ converges in $M$.
\end{lemma}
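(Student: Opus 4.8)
The plan is to view the linear map $\Lambda$ acting on the finite-dimensional space $M$ and to estimate its operator norm on $M$ from the hypothesis $\Phi\in\Omega$, so that the standard power series estimate for $\log(\Id-\Lambda)$ applies. First I would note that $\Lambda$ is a genuine linear endomorphism of the finite-dimensional vector space $M$: indeed $M$ is spanned by the restrictions $f_\alpha|_F$, and $\Phi^*f_\alpha = \sum_\beta c_{\alpha\beta} f_\beta$ with $c_{\alpha\beta}\in\CC(U)$, so restricting to $F = X_q$ and evaluating the continuous functions $c_{\alpha\beta}$ at $q$ gives a matrix $(c_{\alpha\beta}(q))$ representing $\Phi^*|_M$. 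Hence $\Lambda = \Id - \Phi^*|_M$ is represented by $I - (c_{\alpha\beta}(q))$.

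Next I would use the defining constraint of $\Omega$. Since $q\in K$ and $\Phi\in\Omega$, we have $\|(c_{\alpha\beta}) - I\|_K < 1/3$, and evaluating at $q$ gives $\|(c_{\alpha\beta}(q)) - I\| < 1/3$. Thus the operator norm of $\Lambda$ on $M$ (with respect to a fixed choice of norm on $M$ compatible with the matrix norm used for the $c_{\alpha\beta}$, or after adjusting constants to pass between equivalent norms on the finite-dimensional $M$) is bounded by something less than $1$; concretely one can arrange $\|\Lambda\|_{\mathrm{op}} < 1/2 < 1$. Here the only mild subtlety is matching the matrix norm $\|\cdot\|_K$ appearing in the definition of $\Omega$ with the Euclidean operator norm on $M$: since all norms on the finite-dimensional space $\End(M)$ are equivalent, shrinking $\Omega$ if necessary (which we are free to do, having only imposed that $\Omega$ be \emph{some} neighborhood of the identity) ensures $\|\Lambda\|_{\mathrm{op}} < 1$.

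Finally, with $\|\Lambda\|_{\mathrm{op}} = \rho < 1$, the series $S(\Lambda)(m) = -\Lambda(m) - \tfrac12\Lambda^2(m) - \tfrac13\Lambda^3(m) - \cdots$ is dominated in norm by $\sum_{k\geq 1}\tfrac1k \rho^k \|m\| = -\log(1-\rho)\cdot\|m\| < \infty$, so the partial sums form a Cauchy sequence in the complete (finite-dimensional, hence Banach) space $M$, and therefore $S(\Lambda)(m)$ converges in $M$. I do not expect any genuine obstacle here: the statement is essentially the elementary fact that $\log$ has radius of convergence $1$ around $1$, transported to the operator setting on a finite-dimensional space; the only point requiring a word of care is the passage from the sup-norm bound on the coefficient matrix over $K$ to an operator-norm bound on $M$, which is handled by finite-dimensionality and the freedom to shrink $\Omega$.
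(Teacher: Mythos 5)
Your proof is correct and takes essentially the same route as the paper: bound the norm of $\Lambda$ by the hypothesis on $\Omega$ and invoke convergence of the logarithm series. The one place where you deviate — and where the paper's phrasing is cleaner — is that you try to view $\Lambda$ directly as an endomorphism of $M$ and estimate an operator norm on $M$, which forces you into the ``norm equivalence plus shrink $\Omega$'' escape hatch. This is avoidable and, in fact, better avoided. The $f_\alpha|_F$ are only a spanning set, not a priori a basis of $M$, so there is no canonical matrix of $\Lambda$ on $M$; and the norm-equivalence constant on $\End(M)$ depends on $q$ (and on the image $M\subset\O_\alg(F)$), so shrinking $\Omega$ to win the estimate would make $\Omega$ depend on $q$, which is at odds with the paper's fixing of $\Omega$ once and for all over the compact $K$ (this uniformity is used later in Corollary \ref{cor:logexists}). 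The paper sidesteps all of this by working at the level of coefficient vectors in $\C^n$: the matrix $I-C=(\delta_{\alpha\beta}-c_{\alpha\beta}(q))$ acts on the coefficient vector $(a_\alpha)$, one shows inductively that $\Lambda^k(m)$ is represented by $(I-C)^k$ applied to the coefficients, the submultiplicative matrix norm bound $\|(I-C)^k\|<(1/3)^k$ holds uniformly for $q\in K$ with no shrinking of $\Omega$, and convergence in $\C^n$ is pushed forward to $M$ along the (automatically continuous) linear map $\C^n\to M$. So: same idea, but you should drop the ``shrink $\Omega$'' step and argue through the coefficient matrix to keep the estimate uniform over $K$ and to avoid pretending $I-C$ is a matrix representation on $M$.
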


\begin{proof}
We have $m=\sum a_\alpha f_\alpha|_F$ where the $a_\alpha\in\C$. Then $\Lambda(m)=\sum_{\alpha,\beta}(\delta_{\alpha,\beta}-c_{\alpha,\beta}(q))a_\beta f_\beta|_F$. Let $C$ denote $(c_{\alpha,\beta}(q))$. Then $||I-C||<1/3$. By induction, $\Lambda^k$ acts on $\sum_\alpha a_\alpha f_\alpha|_F$ via the matrix $(I-C)^k$, where $||(I-C)^k||<(1/3)^k$.
Let 
$$
C'=-\sum_{k=1}^\infty (I-C)^k.
$$
Then  
$S(\Lambda)(m)$ converges to $\sum_{\alpha,\beta}C'_{\alpha,\beta}a_\beta f_\beta|_F\in M$.
\end{proof}

For $f\in M$, define $D(f)$ to be the limit of $S(\Lambda)(f)$. Then $D$ is a $G$-equivariant linear endomorphism of $M$. 

\begin{proposition}\label{prop:derivation}
Suppose that $m_1$, $m_2$ and $m_1m_2$ are in $M$. Then 
$$
D(m_1m_2)=D(m_1)m_2+m_1D(m_2).
$$
\end{proposition}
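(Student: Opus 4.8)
The plan is to exploit the fact that $D$ is defined as a limit of the operators $S(\Lambda)$, where $\Lambda = \Id - \Phi^*$ and $\Phi^*$ is an algebra homomorphism on the coordinate ring of $F$. The key algebraic identity is that the formal logarithm series $S(T) = -T - \tfrac12 T^2 - \tfrac13 T^3 - \cdots$, when applied to the operator $\Id - \phi$ for an algebra automorphism $\phi$, produces a \emph{derivation}; this is the infinitesimal version of the statement that $\log$ of a group-like element is primitive. Concretely, $\phi = \Phi^*$ satisfies $\phi(m_1 m_2) = \phi(m_1)\phi(m_2)$, and one computes that the operator $L := \log\phi$ (as a formal power series in $\Lambda = \Id - \phi$) satisfies the Leibniz rule. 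The cleanest way to see this is via the identity $\phi = \exp(L)$ together with the observation that $\phi \otimes \phi = \exp(L \otimes \Id + \Id \otimes L)$ on $\O_\alg(F)\otimes\O_\alg(F)$: since $\phi$ is multiplicative, the multiplication map $\mu$ intertwines $\phi\otimes\phi$ with $\phi$, hence intertwines $L\otimes\Id + \Id\otimes L$ with $L$ (by taking logarithms of commuting-enough operators, or by comparing the series term by term), which is exactly the Leibniz rule $L(m_1 m_2) = L(m_1)m_2 + m_1 L(m_2)$.

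First I would reduce to a statement on the finite-dimensional space $M$: by Lemma~\ref{lem:converges} the series $S(\Lambda)$ converges on $M$, and by hypothesis $m_1, m_2, m_1 m_2 \in M$, so all three quantities $D(m_1 m_2)$, $D(m_1) m_2 + m_1 D(m_2)$ live in a single finite-dimensional space (recall $M$ was chosen large enough to contain all products of bounded degree of the generating covariants, which is where the degree bound $N$ from the definition enters). Then I would argue at the level of formal power series in a single auxiliary variable: consider the algebra automorphism $\phi_s$ interpolating between $\Id$ and $\phi$ — or more simply, work purely formally with the identity that for \emph{any} algebra homomorphism $\phi$ with $\Id - \phi$ topologically nilpotent on the relevant finite-dimensional piece, $\log\phi$ is a derivation there. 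I would verify this by induction on the degree: writing $S_n$ for the $n$-th partial sum of $S(\Lambda)$, one shows $S_n(m_1 m_2) - S_n(m_1) m_2 - m_1 S_n(m_2)$ is a polynomial in $\Lambda$ of order $\geq n+1$ (an ``error term''), using the binomial-type expansion of $\Lambda^k(m_1 m_2)$ in terms of $\Lambda^i(m_1)\Lambda^j(m_2)$ coming from multiplicativity of $\phi$, and the crucial numerical identity that the logarithm series is the unique primitive power series, i.e.\ its coproduct vanishes. Passing to the limit, which is legitimate since all operators converge in the finite-dimensional space $M$ (the estimates $\|(I-C)^k\| < (1/3)^k$ from Lemma~\ref{lem:converges} give absolute convergence), yields the claimed Leibniz rule.

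The main obstacle, I expect, is bookkeeping the convergence and the passage from the ``formal'' Leibniz identity for power series to the genuine equality of the limits $D(m_1 m_2)$ and $D(m_1) m_2 + m_1 D(m_2)$ in $M$. The formal identity says that, as power series in $\Lambda$, $S(\Lambda)(m_1 m_2) = S(\Lambda)(m_1)\, m_2 + m_1\, S(\Lambda)(m_2)$ holds coefficient-by-coefficient once one knows multiplicativity $\Lambda^k(m_1 m_2) = \sum \text{(binomial terms in } \Lambda^i m_1, \Lambda^j m_2)$ — but one must check that the products $\Lambda^i(m_1)\Lambda^j(m_2)$ that appear on the right-hand side do lie in $M$, or at least that the regrouping into $M$-valued partial sums is valid; this is precisely why $M$ was defined to absorb products of covariants up to the degree bound. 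Once that is in place the argument is essentially the standard fact that the logarithm of a Hopf-algebra-like group element is primitive, specialized to the affine $G$-variety $F$ with $\phi = \Phi^*$ an algebra automorphism, and the proposition follows.
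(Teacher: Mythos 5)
Your proposal follows essentially the same route as the paper: exploit the multiplicativity of $\Phi^*$ to expand $\Lambda^k(m_1m_2)$ into a ``binomial'' sum of terms $\Lambda^n(m_1)\Lambda^{\ell-n}(m_2)$, invoke the primitivity of the logarithm power series (in the paper this is the explicit identity from Praagman that $\sum_k \tfrac1k c_{k\ell n}$ vanishes unless $\ell>0$ and $n\in\{0,\ell\}$, where it equals $1/\ell$), and justify the rearrangement of the double series by absolute convergence coming from the $\|(I-C)^k\|<(1/3)^k$ bound of Lemma~\ref{lem:converges}. The one place your narrative is a little off is the worry about whether $\Lambda^i(m_1)\Lambda^j(m_2)$ lands back in $M$ and your suggestion that $M$ was built ``to absorb products'': in fact $M$ need not be closed under multiplication, and the rearrangement takes place in the finite-dimensional subspace $M\cdot M\subset\O_\alg(F)$, which is all one needs for absolute convergence; the degree bound $N$ in the construction of $M$ is there for the later Proposition~\ref{prop:Dexists} (to ensure $D$ preserves the ideal of relations), not for this Leibniz rule.
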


\begin{proof}
By \cite[Proof of Theorem 4]{Praagman} 
$$
\Lambda^k(m_1m_2)=\sum_{l=0}^{2k}\sum_{n=0}^\ell c_{k\ell n} \Lambda^n(m_1)\Lambda^{\ell-n}(m_2)
$$
where  $c_{k\ell n}$ is the coefficient  of $x^ny^{\ell-n}$ in $(x+y-xy)^k$. We know that  $\Lambda^k$ is given by the action of the matrix $(I-C)^k$ where   $||I-C||<1/3$.
The series  $\sum 1/k(x+y-xy)^k$ converges absolutely when $x$ and $y$ have absolute value at most $1/3$. Thus we may make a change in the order of summation:
$$
D(m_1m_2)=\sum_{k=1}^\infty \sum_{l=0}^{2k}\sum_{n=0}^\ell \frac 1kc_{k\ell n}\Lambda^n(m_1)\Lambda^{\ell-n}(m_2)=\sum_{\ell=0}^\infty\sum_{n=0}^\ell\sum_{k=1}^\infty \frac 1k c_{k\ell n}\Lambda^n(m_1)\Lambda^{\ell-n}(m_2).
$$
By \cite[Proof of Theorem 4]{Praagman}   the (actually finite) sum $\sum_{k=1}^\infty (1/k) c_{k\ell n}$ equals $0$ unless we have $\ell>0$ and ($n=0$   or $n=\ell$), in which case the value is $1/\ell$.
Hence 
$$
D(m_1m_2)=-\sum_{\ell=1}^\infty\frac 1\ell (\Lambda^\ell(m_1)m_2+m_1\Lambda^\ell(m_2))=D(m_1)m_2+m_1D(m_2).
$$
\end{proof}

\begin{proposition}\label{prop:Dexists}
Let $D\colon M\to M$ be as above. Then $D$ extends to a $G$-equivariant   derivation of $\O_\alg(F)$.
\end{proposition}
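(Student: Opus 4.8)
The plan is to realize $\O_\alg(F)$ as a finitely presented $\C$-algebra on the restricted generators $f_i|_F$, to extend $D$ to the ambient polynomial ring by the chain rule, and then to check that the result descends to $\O_\alg(F)$ and is a $G$-equivariant derivation. First I would fix the presentation: the $f_i|_F$ generate $\O_\alg(F)$ as a $\C$-algebra, and (base-changing to $q$ the $\O(U)$-presentation of $\O_\gf(X_U)$ by the standard generators $f_i$ and the generating relations $h_k$) the ideal of relations among the $f_i|_F$ in $\C[x_1,\dots,x_n]$ is generated by $h_1|_F,\dots,h_m|_F$, each of degree $\le N$. A derivation of $\O_\alg(F)=\C[x]/(h_1|_F,\dots,h_m|_F)$ is the same thing as a derivation $\delta$ of $\C[x]$ carrying each $h_k|_F$ into the ideal $(h_1|_F,\dots,h_m|_F)$, and such a $\delta$ is specified by arbitrary choices of $\delta(x_i)\in\C[x]$; I would take $\delta(x_i)$ to be any polynomial representing $D(f_i|_F)\in M\subseteq\O_\alg(F)$. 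Modulo the relations, $\delta(h_k|_F)$ then equals $\sum_i\frac{\pt h_k}{\pt x_i}(f_1,\dots,f_n)|_F\cdot D(f_i|_F)$, so everything comes down to the identity
$$
\sum_i\frac{\pt h_k}{\pt x_i}(f_1,\dots,f_n)\Big|_F\cdot D(f_i|_F)=0\ \text{ in }\O_\alg(F)\qquad(1\le k\le m).
$$

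To prove this I would invoke Proposition \ref{prop:derivation}. By the way $M$ was constructed it contains every monomial of degree $\le N$ in the $f_i|_F$, so for such a monomial $f_{i_1}\cdots f_{i_d}|_F$ (with $d\le N$) every partial product lies in $M$ as well, and an induction on $d$ using Proposition \ref{prop:derivation} gives $D\bigl(f_{i_1}\cdots f_{i_d}|_F\bigr)=\sum_{l=1}^d\bigl(\prod_{j\ne l}f_{i_j}\bigr)\big|_F\cdot D(f_{i_l}|_F)$. By $\C$-linearity of $D$ this yields, for every polynomial $P$ of degree $\le N$,
$$
D\bigl(P(f_1,\dots,f_n)|_F\bigr)=\sum_i\frac{\pt P}{\pt x_i}(f_1,\dots,f_n)\Big|_F\cdot D(f_i|_F).
$$
Taking $P=h_k$ makes the left side $D(0)=0$, which is the identity needed, and hence $\delta$ descends to a derivation $\tilde D$ of $\O_\alg(F)$. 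The same displayed identity, read for the elements of $M$ (each of which is a polynomial of degree $\le N$ in the $f_i|_F$, again by the construction of $M$), shows $\tilde D|_M=D$, so $\tilde D$ genuinely extends $D$.

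For the equivariance I would use the standard conjugation argument: for $g\in G$, the map $\phi\mapsto g^{-1}\!\cdot\tilde D(g\cdot\phi)$ is again a derivation of $\O_\alg(F)$, and on a generator it equals $g^{-1}\!\cdot\tilde D(g\cdot f_i|_F)=g^{-1}\!\cdot D(g\cdot f_i|_F)=\tilde D(f_i|_F)$, using that $g\cdot f_i|_F$ lies in the $G$-submodule $M$, that $\tilde D|_M=D$, and the $G$-equivariance of $D$ on $M$; since two derivations agreeing on algebra generators coincide, $\tilde D$ is $G$-equivariant, i.e.\ $\tilde D\in\Der_\alg(F)^G$ (and it kills $\O_\alg(F)^G=\C$ automatically, so it is of the type used in Remark \ref{rem:Dcomplete}). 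The main obstacle is the relation identity in the second paragraph — concretely, making sure $M$ was built large enough that all the products of degree $\le N$ needed to run the inductive Leibniz computation of Proposition \ref{prop:derivation} stay inside $M$, and that each element of $M$ is represented by a polynomial of degree $\le N$ in the $f_i|_F$; granting that bookkeeping, the remainder is the routine fact that a derivation prescribed on generators extends to the quotient algebra precisely when it annihilates the defining relations.
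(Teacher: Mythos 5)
Your proposal is correct and follows essentially the same route as the paper's proof: both pass to the polynomial ring on the standard generators, note that the relation ideal is generated in degree at most $N$, define the lift of $D$ on the generators by matching it with $D(f_i|_F)$, and invoke Proposition~\ref{prop:derivation} to see that the lifted derivation agrees with $D$ on all degree-$\le N$ polynomials in the $f_i|_F$, hence kills (mod the ideal) the generating relations and descends. The only cosmetic differences are that the paper takes the lift $E$ to be the explicit linear derivation $z_i\mapsto\sum d_{ij}'z_j$ with $(d'_{ij})=\log(a_{ij}(q))$ rather than ``any representative of $D(f_i|_F)$,'' and that you spell out the $G$-equivariance by conjugation whereas the paper leaves that step tacit; both choices amount to the same argument.
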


\begin{proof}
Let $R=\C[z_1,\dots,z_n]$. We have a surjective morphism $\rho\colon R\to\O_\alg(F)$ sending $z_i$ to $f_i|_F$, $i=1,\dots,n$. The kernel $J$ of $\rho$ is generated by
polynomials of degree at most $N$ (the $h_j$ considered as elements of $R$). Let $E$ denote the derivation of $R$ which sends $z_i$ to $\sum d_{ij}'z_j$ where $(d_{ij}')$ is the logarithm of $(a_{ij}(q))$. Recall that $||(a_{ij})-I||_K<1/2$.  By construction, $E$ on the span of the $z_i$ is the pull-back of  $D$ on the span of the $f_i|_F$. By Proposition  \ref{prop:derivation}, $E$ restricted to polynomials of degree at most $N$ is the pull-back of $D$ restricted to polynomials of degree at most $N$ in the $f_i|_F$. Hence $E$ preserves the span of the elements of degree at most $N$ in $J$. Since these elements generate  $J$, we see that $E$ preserves $J$. Hence $E$ induces a derivation of $R/J$, i.e., $D$ extends to a $G$-invariant derivation of $\O_\alg(F)$.
\end{proof}

\begin{corollary}\label{cor:logexists} Let $\Phi\in\Omega$ and let $U'$ denote the interior of our compact subset $K\subset U$.
There is a $D\in\LAc(U')$ such that $\exp(D)=\Phi|_{X_{U'}}$. The mapping $\Omega\ni\Phi\to D\in\LAc(U')$ is continuous.
\end{corollary}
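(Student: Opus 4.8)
The plan is to construct $D$ fiber by fiber out of the derivations $D_q$ supplied by Propositions \ref{prop:derivation} and \ref{prop:Dexists}, to check that this family assembles into a single strongly continuous vector field on $X_{U'}$, and then to recognize $\Phi$ as its time-one flow; continuity of $\Phi\mapsto D$ will drop out of the continuity of the matrix logarithm near the identity.

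First I would pin down the fiberwise picture. For $\Phi\in\Omega$ and $q\in K$, Propositions \ref{prop:derivation} and \ref{prop:Dexists} give a $G$-equivariant derivation $D_q$ of $\O_\alg(X_q)$ with $D_q(f_i|_{X_q})=\sum_j d'_{ij}(q)\,f_j|_{X_q}$, where $(d'_{ij}(q))=\log A(q)$ is the logarithm of the matrix $A(q)=(a_{ij}(q))$ of $\Phi_q^*$ on the span of the $f_i|_{X_q}$ (the series for $\log A(q)$ converges since $\|A(q)-I\|<1/2$). By Remark \ref{rem:Dcomplete}, $D_q\in\Der_\alg(X_q)^G=\operatorname{Lie}\Aut(X_q)^G$, so $\exp(D_q)$ is a well-defined algebraic $G$-automorphism of $X_q$; since $D_q$ preserves the finite-dimensional space $M$ (spanned by the $f_\alpha|_{X_q}$) and $D_q|_M=\log(\Phi_q^*|_M)$, we get $\exp(D_q)|_M=\Phi_q^*|_M$, and because $M$ contains the generating set $\{f_i|_{X_q}\}$ of $\O_\alg(X_q)$ and both maps are algebra automorphisms, $\exp(D_q)=\Phi_q$.

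Next I would glue the $D_q$ over $q$. As the $a_{ij}$ lie in $\CC(U)$ and the matrix logarithm is real-analytic on $\{\|A-I\|<1\}$, the functions $d'_{ij}:=(\log A(\cdot))_{ij}$ lie in $\CC(U')$, where $U'=\operatorname{int}K$. I claim that $f_i\mapsto\sum_j d'_{ij}f_j$, extended by the Leibniz rule and by annihilation of $\O(U')$, defines a $G$-invariant derivation of $\O_\gf(X_{U'})$ (which the $f_i$ generate over $\O(U')$). The only point needing verification is compatibility with relations: for $g\in\O(U')[z_1,\dots,z_n]$ with $g(f)=0$ in $\O_\gf(X_{U'})$, the element $\sum_i(\partial g/\partial z_i)(f)\sum_j d'_{ij}f_j$ of $\O_\gf(X_{U'})$ restricts over each fiber $X_q$ to $D_q\big(g(f)|_{X_q}\big)=D_q(0)=0$ (using that $D_q$ is a derivation with the matrix above); being a holomorphic function on the reduced manifold $X_{U'}$ that vanishes on every fiber, it is identically zero. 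By the local-coordinate construction used in the proof of Proposition \ref{prop:Dbasics}(1), this derivation is the restriction of a vector field $D\in\LAc(U')$ with matrix $(d'_{ij})$ on the $f_i$, and by construction $D|_{X_q}=D_q$ for every $q\in U'$.

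It then remains to identify $\exp(D)$ and to prove continuity. By Remark \ref{rem:Dcomplete}, $D$ is complete; its flow preserves fibers, so $\exp(D)|_{X_q}$ is the time-one flow of $D|_{X_q}=D_q$, namely $\exp(D_q)=\Phi_q=\Phi|_{X_q}$, whence $\exp(D)=\Phi|_{X_{U'}}$ (and in particular $\exp(D)\in\A_c(U')$). For continuity of $\Omega\ni\Phi\mapsto D$: the topologies on $\Omega\subset\A_c(U)$ and on $\LAc(U')$ are defined through the matrices of $\Phi$ and of $D$ on the fixed generating set $\{f_i\}$, matrices for $\Phi$ near a given $\Phi_0\in\Omega$ can be chosen near those of $\Phi_0$, and $A\mapsto\log A$ is locally Lipschitz on $\{\|A-I\|<1/2\}$, so the composite $\Phi\mapsto(a_{ij})\mapsto(\log a_{ij})=(d'_{ij})$ is continuous. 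I expect the assembly step to be the main obstacle: turning the merely pointwise family $\{D_q\}$ into a genuine element of $\LAc(U')$. It rests on the continuous dependence of $\log A(q)$ on $q$, on the vanishing-on-all-fibers argument (reducedness of $X_{U'}$), and on the passage from a derivation of $\O_\gf(X_{U'})$ to a vector field via Proposition \ref{prop:Dbasics}(1); once $D\in\LAc(U')$ is in hand, the identity $\exp(D)=\Phi$ and the continuity statement are formal.
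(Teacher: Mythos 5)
Your proposal is correct and follows essentially the same path as the paper: build the fiberwise derivations $D_q$ from Propositions \ref{prop:derivation} and \ref{prop:Dexists}, note that $(d_{ij})=\log(a_{ij})$ has continuous entries, invoke Proposition \ref{prop:Dbasics}(1) to get $D\in\LAc(U')$, and deduce $\exp D=\Phi|_{X_{U'}}$ and continuity directly from the matrix logarithm; the paper is simply terser about the assembly step that you spell out. One small slip worth fixing: the auxiliary element $\sum_i(\partial g/\partial z_i)(f)\sum_j d'_{ij}f_j$ is continuous but not holomorphic on $X_{U'}$ (the $d'_{ij}$ lie only in $\CC(U')$), though this does not affect the conclusion since pointwise vanishing on the manifold $X_{U'}$ already forces the function to be identically zero.
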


\begin{proof} For $q\in U'$, let $D_q$ be the $G$-equivariant derivation of $\O_\alg(X_q)$ constructed above. Let $D$ be the vector field on $X_{U'}$ whose value on $X_q$ is $D_q$, $q\in U'$. Then $D(f_i)=\sum d_{ij}f_j$ where $(d_{ij})=\log (a_{ij})$. By Proposition \ref{prop:Dbasics}, $D\in\LAc(U')$. By construction, $\exp(D_q)=\Phi_q$ for all $q\in U'$. Hence $\exp D=\Phi|_{X_{U'}}$. The continuity of $\Phi\mapsto D$ is clear since $(d_{ij})=\log (a_{ij})$.
\end{proof}
 
  \begin{definition}\label{def:admitslog} Let $U\subset Q$ be open and let $f_1,\dots,f_n$ be a standard generating set of $\O_\gf(X_U)$. Let $U'\subset U$ be open with $\overline U'\subset U$. We say that $\Phi\in\A_c(U)$ \emph{admits a logarithm in\/}  $\LAc(U')$ if the following hold.
 \begin{enumerate}
\item $\Phi^*f_i=\sum a_{ij}f_j$ where $||(a_{ij})-I||_{\overline{U'}}<1/2$.
\item There is a $D\in\LAc(U')$ such that  $D(f_i)=\sum d_{ij}f_j$ on $X_{U'}$ where $(d_{ij})=\log (a_{ij})$. 
\end{enumerate}
 Note that $(a_{ij})$ is not unique. The condition is that some $(a_{ij})$ corresponding to $\Phi$ satisfies (1) and (2).
\end{definition}
 \begin{remarks}\label{rem:admitslog}
  The formal series $\log\Phi^*$, when applied to any $f_i$, converges to $D(f_i)$. Hence $D$ is independent of the choice of $(a_{ij})$. Properties (1) and (2) imply that $\exp D=\Phi$ over $U'$. Note that $||(d_{ij})||_{\overline U'}<\log 2$ and $(d_{ij})$ is the unique matrix satisfying this property whose exponential is $(a_{ij})$. 
 \end{remarks}
 
 Corollary \ref{cor:logexists} and its proof imply the following result.
 
\begin{theorem}\label{thm:log} Let $K\subset U\subset Q$ where $K$ is compact and $U$ is open. Then there is a neighborhood $\Omega$ of the identity in $\A_c(U)$ and a neighborhood  $U'$ of $K$ in $U$ such that every $\Phi\in\Omega$ admits a logarithm $D=\log\Phi$ in $\LAc(U')$. The mapping $\Phi\to\log\Phi$ is continuous.
\end{theorem}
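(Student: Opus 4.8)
The plan is to reduce to the slice situation, where Corollary \ref{cor:logexists} already produces local logarithms, and then to pin these down intrinsically so that they agree on overlaps and fit Definition \ref{def:admitslog} for a single fixed generating set.

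First I would shrink the picture. Choose a relatively compact open $U_0$ with $K\subset U_0$ and $\overline{U_0}\subset U$, and fix a standard generating set $f_1,\dots,f_n$ of $\O_\gf(X_{U_0})$ (Definition \ref{def:standard-generators}); all matrices below are taken with respect to $\{f_i\}$. Using the slice theorem, cover $K$ by finitely many open sets $U_1,\dots,U_N\subset U_0$ of the slice type considered in \S\ref{sec:logarithms}, together with compact $K_\nu\subset U_\nu$ whose interiors $U_\nu':=\operatorname{int}K_\nu$ cover $K$; put $U':=\bigcup_\nu U_\nu'$, so that $K\subset U'$ and $\overline{U'}\subset U_0$ is compact. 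Corollary \ref{cor:logexists} applied to $K_\nu\subset U_\nu$ gives a neighborhood $\Omega_\nu$ of the identity in $\A_c(U_\nu)$ and, for each $\Phi$ in it, a vector field $D_\nu:=\log(\Phi|_{X_{U_\nu}})\in\LAc(U_\nu')$ with $\exp D_\nu=\Phi|_{X_{U_\nu'}}$, depending continuously on $\Phi$. The restriction maps $\A_c(U)\to\A_c(U_\nu)$ are continuous (compare basic neighborhoods via Lemma \ref{lem:topology-well-defined}), so $\Omega:=\{\Phi\in\A_c(U):\Phi|_{X_{U_\nu}}\in\Omega_\nu\text{ for all }\nu\}$ is a neighborhood of the identity; I would shrink it so that, writing $A=A(\Phi)$ for a matrix of $\Phi$ and $B_\nu$ for the matrix of $D_\nu$, one has $\|A-I\|_{\overline{U'}}<1/2$ and $\|B_\nu\|_{\overline{U'}}<\log 2$ for all $\nu$ (possible since $D_\nu\to0$ as $\Phi\to\operatorname{id}$).

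The heart of the argument is to identify $D_\nu$ on $U_\nu'\cap U'$ with the derivation whose matrix is the principal logarithm $\log A$. Since $\exp D_\nu=\Phi$ over $X_{U_\nu'}$ and $D_\nu$ annihilates functions pulled back from $Q$, the flow formula yields $\Phi^*f_i=\sum_\ell(\exp B_\nu)_{i\ell}f_\ell$; on the dense locus where the $f_i$ restrict to a basis of $\O_\alg(X_q)_V$ this forces $\exp B_\nu=A$, and the two bounds make $B_\nu$ the unique matrix of norm $<\log 2$ with that exponential, so $B_\nu=\log A$ there and hence, by continuity, on all of $U_\nu'\cap U'$. Thus on each overlap $U_\mu'\cap U_\nu'$ the vector fields $D_\mu$ and $D_\nu$ have the same matrix $\log A$, so they coincide there (a vector field being determined by its action on a generating set), and by the sheaf property of $\LAc$ (Proposition \ref{prop:Dbasics}(2)) they glue to a single $D\in\LAc(U')$ whose matrix is $\log A$. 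This $D$ satisfies the two conditions in Definition \ref{def:admitslog} relative to $\{f_i\}$, and $\exp D=\Phi|_{X_{U'}}$ because this holds over each $U_\nu'$. Continuity of $\Phi\mapsto D=\log\Phi$ on $\Omega$ then follows, since $A$ depends continuously on $\Phi$, the principal logarithm is continuous on $\{\,\|{\cdot}-I\|<1\,\}$, and the matrix $\log A$ over $\overline{U'}$ controls $D$ in the topology of $\LAc(U')$.

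I expect the only genuine obstacle to be this coherence step — reconciling the logarithms $D_\nu$ that Corollary \ref{cor:logexists} produces independently on the various slice charts. The point that makes it work is that, once all the relevant quantities are small, each $D_\nu$ is forced to be the derivation attached to the principal logarithm of the single matrix $A$ of $\Phi$ with respect to the fixed generating set $\{f_i\}$, and this no longer depends on $\nu$; the slice charts serve only to guarantee that such logarithms exist locally.
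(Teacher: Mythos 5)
Your overall strategy — cover $K$ by finitely many slice-type opens, apply Corollary~\ref{cor:logexists} on each, and glue the resulting local logarithms — is exactly what the paper has in mind when it says the theorem follows from Corollary~\ref{cor:logexists} ``and its proof.'' The setup (fixing one generating set $\{f_i\}$ on a relatively compact $U_0\supset K$, forming $\Omega$ as the intersection of the pullbacks of the $\Omega_\nu$, and invoking the sheaf property of $\LAc$ from Proposition~\ref{prop:Dbasics}(2)) is correct, and so is the continuity assertion at the end.

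The one genuine gap is in the coherence step. You argue that on ``the dense locus where the $f_i$ restrict to a basis of $\O_\alg(X_q)_V$'' the relation $\Phi^*f_i=\sum(\exp B_\nu)_{i\ell}f_\ell$ forces $\exp B_\nu=A$. But a standard generating set need not restrict to a basis on any $X_q$: if the number of generators exceeds $\dim \O_\alg(X_q)_V$ for every $q$ (which can certainly happen over a relatively compact $U_0$), that locus is empty, and the matrix identity $\exp B_\nu=A$ is not forced by any linear independence. Fortunately, you do not need matrix uniqueness at all. As recorded in Remark~\ref{rem:admitslog}, the local vector field $D_\nu$ satisfies $D_\nu(f_i)=\lim_{m\to\infty}\bigl(-\Lambda'-\tfrac12(\Lambda')^2-\cdots-\tfrac1m(\Lambda')^m\bigr)(f_i)$ with $\Lambda'=\Id-\Phi^*$, and the right-hand side is intrinsic — it depends only on $\Phi$ and $f_i$, not on the slice chart $\nu$ or on any choice of associated matrix. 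Since a $G$-invariant vector field in $\LAc$ annihilating $\O(U)$ is determined by its values on a generating set, the $D_\nu$ agree on overlaps, and the gluing goes through. With this replacement the proof is complete and is, in substance, the argument the paper intends.
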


 \begin{corollary}\label{cor:Ac_complete}
Let $\Phi_n$ be a Cauchy sequence in $\A_c(U)$. Then $\Phi_n\to\Phi\in\A_c(U)$.
\end{corollary}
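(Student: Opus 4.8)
The plan is to localize on $Q$, transport the Cauchy sequence into the Fr\'echet space $\LAc$ by taking logarithms (Theorem \ref{thm:log}), use completeness there (Proposition \ref{prop:Dbasics}(4)), push the limit back with the continuous exponential (Proposition \ref{prop:Dbasics}(3)), and finally glue. Two preliminary observations make the localization legitimate. First, the topology of $\A_c(U)$ is Hausdorff: if $\Phi\ne\Phi'$ then $\Phi'\Phi\inv$ acts nontrivially on some fiber, so every matrix associated to it differs from the identity at some point of $U$, hence $\Phi'\Phi\inv$ is bounded away from $\Id$. Second, right translation $\Theta\mapsto\Theta g$ is a homeomorphism of $\A_c(U)$, since $(\Theta'g)(\Theta g)\inv=\Theta'\Theta\inv$; in particular a Cauchy sequence stays Cauchy after right translation. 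Consequently it suffices to show: for each $q\in U$ there are an open $U'$ with $q\in U'$ and $\overline{U'}\subset U$, and a $\Phi^{(q)}\in\A_c(U')$, with $\Phi_n|_{X_{U'}}\to\Phi^{(q)}$. Granting this, a countable subcover $\{U'_j\}$ of $U$ by such sets produces limits that agree on overlaps (uniqueness of limits, plus continuity of restriction maps), hence glue by the sheaf property of $\A_c$ to some $\Phi\in\A_c(U)$; and $\Phi_n\to\Phi$ in $\A_c(U)$ because every compact $L\subset U$ is covered by finitely many $U'_j$.

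Fix $q$ and apply Theorem \ref{thm:log} with $K=\{q\}$: there are a neighborhood $\Omega$ of $\Id$ in $\A_c(U)$ and an open $U'\ni q$ with $\overline{U'}\subset U$ such that every $\Phi\in\Omega$ admits a logarithm $\log\Phi\in\LAc(U')$, with $\Phi\mapsto\log\Phi$ continuous and $\exp(\log\Phi)=\Phi|_{X_{U'}}$ (Remarks \ref{rem:admitslog}). Shrinking $U'$ I may also assume, using coherence of the modules $\O(X_{U'})_{V_j}$ over $\O_{U'}$ as in \S\ref{sec:stronglycontinuous}, that a fixed standard generating set $\mathbf f=\{f_1,\dots,f_r\}$ for $\O_\gf(X_{U'})$ restricts to a basis of $\O(X_p)_V$ for every $p\in\overline{U'}$; then the matrix associated to any element of $\A_c(U')$ with respect to $\mathbf f$ is unique and continuous on $\overline{U'}$. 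Since $\{\Phi_n\}$ is Cauchy, choose $N$ with $\Phi_{n'}\Phi_n\inv\in\Omega$ for $n,n'\ge N$, set $\Psi_n=\Phi_n\Phi_N\inv\in\Omega$ for $n\ge N$, and put $D_n=\log\Psi_n\in\LAc(U')$, so $\exp D_n=\Psi_n|_{X_{U'}}$; note $\{\Psi_n\}_{n\ge N}$ is Cauchy because $\Psi_{n'}\Psi_n\inv=\Phi_{n'}\Phi_n\inv$.

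The decisive step is to prove that $\{D_n\}_{n\ge N}$ is Cauchy in $\LAc(U')$. Let $A_n$ be the matrix of $\Psi_n$ with respect to $\mathbf f$, so $\|A_n-I\|_{\overline{U'}}<1/2$ (Definition \ref{def:admitslog}) and $D_n$ has matrix $\log A_n$ over $U'$ (ordinary matrix logarithm). The bound $\|A_n-I\|_{\overline{U'}}<1/2$ gives $\|A_n\|,\|A_n\inv\|\le 2$ on $\overline{U'}$ for all $n\ge N$. The matrix of $\Psi_{n'}\Psi_n\inv$ with respect to $\mathbf f$ is $A_{n'}A_n\inv$, so Cauchy-ness of $\{\Psi_n\}$ means that for every $\delta>0$ we have $\|A_{n'}A_n\inv-I\|_{\overline{U'}}<\delta$ once $n,n'$ are large; then $\|A_{n'}-A_n\|_{\overline{U'}}=\|(A_{n'}A_n\inv-I)A_n\|_{\overline{U'}}\le 2\delta$. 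Thus $A_n\to A$ uniformly on $\overline{U'}$ with $A$ continuous and $\|A-I\|_{\overline{U'}}\le1/2$; since the matrix logarithm is uniformly continuous on the compact set $\{M\in\mathrm{Mat}_r(\C):\|M-I\|\le1/2\}$, also $\log A_n\to\log A$ uniformly on $\overline{U'}$, which by the definition of the topology on $\LAc$ says precisely that $\{D_n\}$ is Cauchy. As $\LAc(U')$ is Fr\'echet, $D_n\to D\in\LAc(U')$, and comparing matrices $D$ has matrix $\log A$. Continuity of $\exp$ then gives $\Psi_n|_{X_{U'}}=\exp D_n\to\exp D$ in $\A_c(U')$, and applying the right translation by $\Phi_N$ yields $\Phi_n|_{X_{U'}}\to(\exp D)\,\Phi_N|_{X_{U'}}=:\Phi^{(q)}\in\A_c(U')$, which finishes the local step and hence the proof.

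I expect the main obstacle to be exactly the Cauchy-ness of $\{D_n\}$ in $\LAc(U')$. Continuity of $\log$ only controls it near $\Id$, and $\log\Psi_{n'}-\log\Psi_n$ is \emph{not} $\log(\Psi_{n'}\Psi_n\inv)$ because the automorphisms do not commute, so one cannot simply translate to the identity. What rescues the argument is that Theorem \ref{thm:log} places \emph{all} the $\Psi_n$ into a single neighborhood $\Omega$ of $\Id$, which forces the associated matrices $A_n$ together with their inverses to be uniformly bounded on $\overline{U'}$; the Cauchy condition then transfers verbatim to the $A_n$, and from there to the $\log A_n$ by uniform continuity of the matrix logarithm on a compact set. (One could also bypass $\LAc$ altogether and build $\Phi^{(q)}$ directly from the uniform limit $A$ of the matrices, checking that $A$ still satisfies the defining relations of the fibers and has continuous pointwise inverse; the version above is the one adapted to the machinery already in place.)
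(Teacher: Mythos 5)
Your proof is correct and follows the paper's own route: localize near a point $q$, use Theorem~\ref{thm:log} to transport the translated sequence into $\LAc(U')$ via logarithms, invoke Fr\'echet completeness (Proposition~\ref{prop:Dbasics}(4)), and push the limit back with the continuous exponential. Where you go beyond the paper is in the step you correctly flag as decisive: the paper merely writes that ``$D_n$ converges to an element $D\in\LAc(U')$ by Proposition~\ref{prop:Dbasics},'' without explaining why $\{D_n\}$ is Cauchy. This is not automatic --- $\log(\Psi_{n'}\Psi_n^{-1})\neq\log\Psi_{n'}-\log\Psi_n$ in a noncommutative setting, and pointwise continuity of $\log$ at $\Id$ does not by itself carry a Cauchy sequence to a Cauchy sequence. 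Your argument closes this: shrink $U'$ so the associated matrix of each element is unique on $\overline{U'}$ and note that membership of every $\Psi_n$ in the same $\Omega$ forces $\|A_n\|,\|A_n^{-1}\|\le 2$ there; then Cauchy-ness of $\{\Psi_n\}$ gives $\|A_{n'}A_n^{-1}-I\|_{\overline{U'}}\to 0$, hence $\|A_{n'}-A_n\|_{\overline{U'}}\to 0$; finally uniform continuity of the matrix logarithm on the compact ball $\{\|M-I\|\le 1/2\}$ yields that $\log A_n$, and therefore $D_n$, is Cauchy. You also use right translation $\Psi_n=\Phi_n\Phi_N^{-1}$, which preserves the Cauchy condition on the nose (since $\Psi_{n'}\Psi_n^{-1}=\Phi_{n'}\Phi_n^{-1}$), whereas the paper's left translation $\Phi_N^{-1}\Phi_n$ would need a small extra conjugation remark. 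Your preliminary observations about Hausdorffness and gluing local limits make the ``local question'' reduction precise where the paper leaves it implicit. In short: same method, but you supply the justification the paper's one-line appeal to completeness omits.
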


\begin{proof} Since this is a local question, we can assume that we have a standard generating set $\{f_i\}$ for $\O_\gf(U)$. Let $q\in U$  and let $U'$ be a relatively compact neighborhood of $q$ in $U$. Then  there is a neighborhood $\Omega$ of the identity in $\A_c(U)$  such that any $\Psi\in\Omega$   admits a logarithm in $\LAc(U')$. Let $\Omega_0$ be a smaller neighborhood of the identity with $\overline{\Omega}_0\subset\Omega$. There is an $N\in\N$ such that $n\geq N$ implies that $\Phi_N\inv\Phi_n\in\Omega_0$, hence $\log(\Phi_N\inv\Phi_n)=D_n\in\LAc(U')$, and $D_n$ converges to an element  $D\in\LAc(U')$ by Proposition \ref{prop:Dbasics}.  Set   $\Phi=\Phi_N\exp D\in\A(U')$. Since $\exp D_n=\Phi_N\inv\Phi_n$ over $U'$ we have $\Phi_n\to\Phi$ in $A_c(U')$.
\end{proof}

\section{Homotopies in $H^1(Q,\A_c)$}\label{sec:homotopiesAc}

We  establish our main technical result concerning homotopies in $H^1(Q,\A_c)$. We give   proofs of Theorems \ref{thm:A} and \ref{thm:D} and a special case of Theorem \ref{thm:C}.

 Let $\Phi(t)\in\A_c(U)$, $t\in C$, where $C$ is a topological space. We say that $\Phi(t)$ is continuous if relative to a standard generating set, $\Phi(t)$ has corresponding matrices $(a_{ij}(t,q))$ where each $a_{ij}$ is continuous in $t$ and $q\in U$.  (It is probably  false that every continuous map $C\to\A_c(U)$ is continuous in our sense.)\ Let $\AA_c(U)$ denote the set of all  continuous paths $\Phi(t)\in\A_c(U)$, $t\in[0,1]$,  starting at the identity. We have a topology on $\AA_c(U)$ as in \S \ref{sec:stronglycontinuous} and  $\AA_c(U)$ is a topological group. When we talk of homotopies in $\AA_c(U)$ we mean that the corresponding families with parameter space  $[0,1]^2$  are continuous as above. We define continuous families of elements of $\LAc(U)$ similarly. One defines $\AA(U)$ similarly to $\AA_c(U)$ where, of course, the relevant $a_{ij}(t,q)$ are required to be holomorphic in $q$ and continuous in $t$.

Here is our main technical result about $\AA_c$.
\begin{theorem}\label{thm:mainAc}
\begin{enumerate}
\item The topological group $ \AA_c(Q)$ is pathwise connected.
\item If $U\subset Q$ is  open, then $\AA_c(Q)$ is dense in $\AA_c(U)$.
\item $H^1(Q,\AA_c)=0$.
\end{enumerate}
\end{theorem}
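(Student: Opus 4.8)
The plan is to prove the three statements of Theorem~\ref{thm:mainAc} in the order (1), (2), (3), using the logarithm machinery of \S\ref{sec:logarithms} together with the local structure provided by Luna's slice theorem.

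\medskip\noindent\textbf{Proof of (1): path-connectedness of $\AA_c(Q)$.}
First I would note that it suffices to connect an arbitrary $\Phi(t)\in\AA_c(Q)$ to the constant path at the identity. The naive scaling $s\mapsto\Phi(st)$ is a homotopy rel the basepoint that deformation-retracts $\Phi(\cdot)$ to the constant path, provided the map $(s,t)\mapsto\Phi(st)$ is continuous in our sense — which it is, since if $(a_{ij}(t,q))$ is a matrix for $\Phi(t)$ then $(a_{ij}(st,q))$ is a matrix for $\Phi(st)$ and reparametrization preserves joint continuity. So $\AA_c(Q)$ is in fact contractible, hence path-connected. (The subtlety about ``continuous in our sense'' flagged in the text is exactly what makes this require a sentence rather than being automatic, but the reparametrization argument handles it.)

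\medskip\noindent\textbf{Proof of (2): density of $\AA_c(Q)$ in $\AA_c(U)$.}
This is the heart of the matter and where I expect the main obstacle to lie. Given $\Phi(t)\in\AA_c(U)$ and a basic neighborhood $\Omega'_{K,\epsilon,\mathbf f}$-type neighborhood of it (for $K$ compact in $U$ and a standard generating set $\mathbf f$), I want to produce $\Psi(t)\in\AA_c(Q)$ agreeing with $\Phi(t)$ to within $\epsilon$ on $K$, uniformly in $t$. Shrinking $K$ and using a finite cover by slice charts $U_i\cong T_{B_i}$, the problem localizes. On each chart, after restricting $t$ to a subinterval so that $\Phi(t)\Phi(t_0)\inv$ stays in the logarithm neighborhood $\Omega$ of Theorem~\ref{thm:log}, I can write $\Phi(t)=\Phi(t_0)\exp D(t)$ with $D(t)\in\LAc(U')$ depending continuously on $t$; this reduces the automorphism problem to an \emph{additive} approximation problem for strongly continuous vector fields. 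A strongly continuous $G$-invariant formally holomorphic vector field on $X_{U'}$, written via its matrix $(d_{ij})$ against standard generators that are restrictions of homogeneous polynomials on $T_W$, is approximated by replacing each continuous entry $d_{ij}(q)$ by a holomorphic (hence globally defined on a Stein $Q$, by Cartan's Theorem~B / Oka--Weil) function close to it on $K$ — the resulting matrix still defines an element of $\LAc$ over a neighborhood of $K$ by Proposition~\ref{prop:Dbasics}(1), and exponentiating and patching the pieces across the finitely many charts (using a partition of unity on $Q$ to glue the locally-defined vector fields, which is legitimate since $\LAc$ is a module over continuous functions by Proposition~\ref{prop:Dbasics}(2)) yields the desired global element of $\AA_c(Q)$. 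The delicate points are: (a) the partition-of-unity gluing must be done for the \emph{paths} $D(t)$ simultaneously and the cocycle/patching corrections controlled uniformly in $t$ and on $K$; (b) one must iterate over a partition of $[0,1]$ into finitely many subintervals, composing the approximants, keeping the accumulated error under control; (c) the matrix entries $(d_{ij})$ are not unique, so one must fix a consistent choice on overlaps — on the overlaps one can arrange the generators to restrict to a basis of $\O(X_q)_V$ near each point, making the matrix locally unique, and patch with a partition of unity subordinate to that refinement. This is genuinely the main obstacle: turning the local, non-canonical, matrix-level approximation into a global statement about automorphisms, uniformly over the path parameter.

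\medskip\noindent\textbf{Proof of (3): $H^1(Q,\AA_c)=0$.}
Given a cocycle $(\Theta_{ij})\in Z^1(Q,\AA_c)$ for a cover $\{U_i\}$ of $Q$, I want coboundary terms $c_i\in\AA_c(U_i)$ with $\Theta_{ij}=c_i c_j\inv$. Here the endpoint $t=0$ of each path is the identity, so $(\Theta_{ij}(0))$ is the trivial cocycle; the content is that the \emph{homotopy} itself can be trivialized. I would argue as follows. Since $Q$ is a (finite-dimensional, paracompact) Stein space, one may pass to a locally finite cover and, since the problem is genuinely about the sheaf $\AA_c$ of continuous-in-$t$ families valued in the sheaf $\A_c$, use that $\AA_c(U)$ is, heuristically, ``continuous functions $[0,1]\to\A_c(U)$ vanishing at $0$,'' a soft/fine-type object: a partition of unity on $Q$ lets one write any such $\AA_c$-cocycle as a coboundary by the standard Tietze/partition-of-unity argument, provided one can make sense of ``multiplying a section of $\AA_c$ by a bump function.'' Concretely, using Theorem~\ref{thm:log} one lifts each $\Theta_{ij}$, on a suitable refinement and on each of finitely many subintervals of $[0,1]$, to a path $E_{ij}(t)$ in $\LAc$; the additive cocycle condition for $(E_{ij})$ up to higher-order/Campbell--Hausdorff corrections is split by a partition of unity (set $E_i=\sum_k \rho_k E_{ik}$), the correction terms being handled by the usual iteration since $\LAc$ is a sheaf of Lie algebras closed under the operations needed for Baker--Campbell--Hausdorff locally; exponentiating and composing the pieces over the subintervals of $[0,1]$ gives the $c_i$. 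The key enabling facts are again Proposition~\ref{prop:Dbasics}(2),(4) ($\LAc$ a fine sheaf of Fréchet Lie algebras over $C^0_Q$) and Corollary~\ref{cor:Ac_complete} (completeness of $\A_c(U)$, so the iterated compositions converge). I expect part (3) to be a relatively formal consequence of (2) and the logarithm/partition-of-unity technology, with part (2) carrying the real weight.
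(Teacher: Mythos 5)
Your part (1) matches the paper's one-line argument, and your part (3) is a defensible alternative route (soft-sheaf/BCH splitting rather than the paper's Cousin-type two-set gluing lemma followed by a Cartan exhaustion and the limit argument via Corollary~\ref{cor:Ac_complete}). The problem is in part (2), which you correctly identify as carrying the weight but then argue incorrectly.

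The step ``replace each continuous entry $d_{ij}(q)$ by a holomorphic function close to it on $K$'' is simply false: a continuous function on a compact set with nonempty interior cannot in general be uniformly approximated by functions holomorphic near $K$ (a uniform limit of holomorphic functions is holomorphic). More to the point, this step is not needed, and the fact that you reach for it suggests a misreading of what (2) asserts: it is density in $\AA_c(U)$, i.e.\ among \emph{continuous} families, not anything involving $\AA$. Once you have, via Theorem~\ref{thm:log} and a partition $0=t_0<\dots<t_m=1$, continuous families $D_j(s)\in\LAc(U')$ with $\Phi(s+t_j)=\Phi(t_j)\exp(D_j(s))$ over a relatively compact $U'\supset K$, you are done: multiply each $D_j(s)$ by a fixed continuous cutoff $\chi\in C^0_c(U')$ equal to $1$ near $K$. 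Since $\LAc$ is a module over $C^0_Q$ (Proposition~\ref{prop:Dbasics}(2)), $\chi D_j(s)\in\LAc(Q)$, and the iterated exponential formula defines an element of $\AA_c(Q)$ that actually \emph{equals} $\Phi$ on a neighborhood of $K$ --- no approximation, no holomorphy, no slice-chart patching or uniqueness-of-matrix issues. Your invocation of Cartan~B/Oka--Weil, and the worry about consistent matrix choices on overlaps, are artifacts of importing the (genuinely hard) holomorphic approximation problem of Theorem~\ref{thm:B} into a statement that lives entirely inside the soft sheaf $\LAc$. Since your (3) leans on (2), it inherits the gap; with (2) repaired, either the paper's exhaustion argument or a carefully controlled BCH iteration (where the controls come from Theorem~\ref{thm:log} and Corollary~\ref{cor:Ac_complete}) will close it.
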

\begin{proof}
Let $\Phi(t)$ be an element of $\AA_c(Q)$. Since $\{0\}$ is a deformation retract of $[0,1]$, there is a homotopy $\Phi(s,t)$ with $\Phi(0,t)=\Phi(t)$ and $\Phi(1,t)$ the identity automorphism. Hence we have (1). For (2), let $\Phi\in\AA_c(U)$. Let $K$ be a compact subset of $U$ and $U'$ a relatively compact neighborhood of $K$ in $U$. It follows from Theorem \ref{thm:log} that there are $0=t_0<t_1<\dots<t_m=1$ and continuous families   $D_j(s)$ in $\LAc(U')$  for $s\in[0,t_{j+1}-t_j]$ such that, over $U'$,  $\Phi(s+t_{j})=\Phi({t_j})\exp(D_j(s))$, $s\in[0,t_{j+1}-t_j]$, $j=0,\dots,m-1$.  Multiplying by a cutoff function, we can assume that the $D_j(s)$ are in $\LAc(Q)$. Then our formula gives an element of $\AA_c(Q)$ which restricts to $\Phi$ on a neighborhood of $K$ and we have (2).

Let $K\subset Q$ be compact which is of the form $K'\cup K''$ where $K'$ and $K''$ are compact. Let $U=U'\cup U''$ be a neighborhood of $K$ where $K'\subset U'$, $K''\subset U''$. Let $\Phi(t)$ be in $Z^1(U,\AA_c)$ for the open covering $\{U',U''\}$ of $U$. Then $\Phi(t)$ is just an element in $\AA_c(U'\cap U'')$. By (2) we can write $\Phi=\Psi_1\Psi_2\inv$ where $\Psi_1$ is defined over $Q$ (hence over $U'$) and $\Psi_2$ is close  to the identity over $K'\cap K''$. Then $\Psi_2(t)=\exp D(t)$ where $D(t)\in\LAc(U'\cap U'')$ is a continuous family and $D(0)=0$. Using a cutoff function again, we can find $D_0(t)\in\LAc(Q)$ which equals $D(t)$ in a neighborhood of $K'\cap K''$ and vanishes when $t=0$. We have $\Phi=\Psi_1\Psi_2\inv$ where $\Psi_2\inv$ is the exponential of $D_0(t)$. Thus the cohomology class of $\Phi$ becomes trivial if we replace $U'$ and $U''$ by slightly smaller neighborhoods of $K'$ and $K''$. Let $H^1(K,\AA_c)$ denote the direct limit of $H^1(U,\AA_c)$ for $U$ a neighborhood of $K$. As in   \cite[\S 5]{Cartan58}, our result above shows that there is a sequence of compact sets $K_1\subset V_2\subset K_2\dots$ with $V_n$   the interior of $K_n$, $Q=\bigcup K_n$ and $H^1(K_n,\AA_c)=0$ for all $n$.

Let $\{U_i\}$ be an open cover of $Q$ and $\Phi_{ij}\in\AA_c(U_i\cap U_j)$ a cocycle. There are $c_{i}^n\in\AA_c(U_i\cap V_n)$ such that $\Phi_{ij}=(c_i^n)\inv c_j^n$ on $U_i\cap U_j\cap V_n$. Thus $c_i^{n+1}(c_i^n)\inv=c_j^{n+1}(c_j^n)\inv$ on $U_i\cap U_j\cap V_n$. The $c_i^{n+1}(c_i^n)\inv$ define a section $d\in\AA_c(V_n)$. By (2) there is a section $d'$ of $\AA_c(Q)$ which is arbitrarily close to $d$ on $K_{n-1}$.  Replace each $c_i^{n+1}$ by $(d')\inv c_i^{n+1}$. Then $c_i^{n+1}$ is very close to $c_i^n$ on $K_{n-1}$ and we can arrange that   the limit as $n\to\infty$ of the $c_i^n$ converges on every compact subset to $c_i\in\AA_c(U_i)$ such that $\Phi_{ij}=c_i\inv c_j$. We have used Corollary \ref{cor:Ac_complete}. This completes the proof of (3).
\end{proof}

Note that (3) says that for any homotopy of a cocycle $\Phi_{ij}(t)$ starting at the identity there are $c_i(t)\in\A_c(U_i)$ such that $\Phi_{ij}(t)=c_i(t)\inv c_j(t)$ for all $t\in[0,1]$. Hence $\Phi_{ij}(t)$ is the trivial element in $H^1(Q,\A_c)$ for all $t$. We now use a trick to show a similar result   if we only assume that $\Phi_{ij}(0)\in Z^1(Q,\A)$.

 Let $\Psi_{ij}\in Z^1(Q,\A)$ for some open cover $\{U_i\}$ of $Q$.  By \cite[Theorem 5.11]{KLSOka}, there is a Stein $G$-manifold $Y$ with quotient $Q$ corresponding to the $\Psi_{ij}$. Let $X_i=X_{U_i}$ and $Y_i=Y_{U_i}$. Then there are $G$-biholomorphisms $\Psi_i\colon X_i\to Y_i$ over the identity of $U_i$ such that $\Psi_i\inv\Psi_j=\Psi_{ij}$. 
 
 Here is an analogue of the twist construction in Galois cohomology. We leave the proof to the reader.
 
 \begin{lemma}\label{lem:reduceId}
 Let $\Psi_{ij}\in Z^1(Q,\A)$  and $\Phi_{ij}\in Z^1(Q,\A_c)$   be cocycles for the open cover $\{U_i\}$ of $Q$.  Let $Y$ and $\Psi_i\colon X_i\to Y_i$ be as above.  The mapping $\Phi_{ij}\mapsto\Psi_i\Phi_{ij}\Psi_j\inv$ induces an isomorphism of $H^1(Q,\A_c)$ and  $H^1(Q,\A_c^Y)$ which sends the class $\Psi_{ij}$ to the trivial class of $H^1(Q,\A_c^Y)$.
 \end{lemma}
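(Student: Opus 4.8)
The plan is to verify that the ``twist'' map $\Phi_{ij}\mapsto \Psi_i\Phi_{ij}\Psi_j\inv$ is well-defined with values in $Z^1(Q,\A_c^Y)$, that it descends to cohomology, that it is a bijection, and finally that it carries $[\Psi_{ij}]$ to the trivial class. First I would check that $\Psi_i\Phi_{ij}\Psi_j\inv$ lands in $\A_c^Y(U_i\cap U_j)$: since $\Psi_i\colon X_i\to Y_i$ is a $G$-biholomorphism over $\Id_{U_i}$, conjugation by $\Psi_i$ carries a strong $G$-homeomorphism of $X_{U_i\cap U_j}$ to a $G$-homeomorphism of $Y_{U_i\cap U_j}$ over the identity, and ``strongness'' is preserved because it is an intrinsic condition on the induced maps of the fibers (expressible via any standard generating set, and $\Psi_i^*$ identifies standard generating sets of $\O_\gf(X_{U_i\cap U_j})$ with those of $\O_\gf(Y_{U_i\cap U_j})$). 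The cocycle identity is immediate: on triple overlaps $(\Psi_i\Phi_{ij}\Psi_j\inv)(\Psi_j\Phi_{jk}\Psi_k\inv) = \Psi_i\Phi_{ik}\Psi_k\inv$ using $\Phi_{ij}\Phi_{jk}=\Phi_{ik}$.

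Next I would check that the map respects coboundaries and hence induces a map on $H^1$. If $\Phi_{ij} = c_i\Phi_{ij}'c_j\inv$ with $c_i\in\A_c(U_i)$, then $\Psi_i\Phi_{ij}\Psi_j\inv = (\Psi_i c_i\Psi_i\inv)(\Psi_i\Phi_{ij}'\Psi_j\inv)(\Psi_j c_j\Psi_j\inv)\inv$, and $\Psi_i c_i\Psi_i\inv\in\A_c^Y(U_i)$ by the same strongness argument. The inverse map is given by conjugation by $\Psi_i\inv$, i.e.\ $\Theta_{ij}\mapsto \Psi_i\inv\Theta_{ij}\Psi_i$ (note the cocycle is indexed so this is $\Psi_i\inv\Theta_{ij}\Psi_j$ after accounting for $\Psi_j\inv\Psi_i$ being absorbed — more carefully, the inverse of the twist by the cocycle $\Psi_{ij}$ is the twist by the cocycle $\Psi_{ij}\inv$), so the two maps are mutually inverse bijections on cohomology, and in fact on the level of $Z^1$ up to relabeling. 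Finally, to see that $[\Psi_{ij}]$ goes to the trivial class: $\Psi_i\Psi_{ij}\Psi_j\inv = \Psi_i(\Psi_i\inv\Psi_j)\Psi_j\inv = \id_{Y_{U_i\cap U_j}}$, which is the identity cocycle, hence represents the trivial class of $H^1(Q,\A_c^Y)$.

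The only place demanding genuine care is the claim that conjugation by the $\Psi_i$ preserves strongness, i.e.\ maps $\A_c^X$ to $\A_c^Y$; everything else is formal cocycle bookkeeping of the kind familiar from the twisting (torsor) construction in nonabelian cohomology. One clean way to handle this is to observe that $\Psi_i^*$ induces an $\O(U_i)$-algebra isomorphism $\O_\gf(Y_{U_i})\to\O_\gf(X_{U_i})$ compatible with the $G$-isotypic decomposition, so it carries a standard generating set $\{g_\alpha\}$ of $\O_\gf(Y_{U_i\cap U_j})$ to a standard generating set $\{\Psi_i^*g_\alpha\}$ of $\O_\gf(X_{U_i\cap U_j})$; then for $\Phi_{ij}\in\A_c^X(U_i\cap U_j)$ with $\Phi_{ij}^*(\Psi_i^*g_\alpha) = \sum_\beta a_{\alpha\beta}(\Psi_i^*g_\beta)$ and $a_{\alpha\beta}\in\CC(U_i\cap U_j)$, one gets $(\Psi_i\Phi_{ij}\Psi_i\inv)^*g_\alpha = \sum_\beta a_{\alpha\beta}g_\beta$ with the \emph{same} continuous coefficients, and the fiberwise $G$-isomorphism condition transfers verbatim via $\Psi_{i,q}$. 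This makes the transition from $X$ to $Y$ transparent. Since the authors explicitly leave the proof to the reader, I would present only this key verification together with the one-line identity $\Psi_i\Psi_{ij}\Psi_j\inv=\id$, and note the rest is standard.
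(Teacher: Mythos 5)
Your overall route is the expected one (the paper leaves this proof to the reader), and the formal cocycle manipulations, the coboundary check, and the observation that $\Psi_i\Psi_{ij}\Psi_j\inv=\id$ are all correct. However, the one step you single out as needing care contains a real (though easily fixable) imprecision: you argue that \emph{conjugation} by $\Psi_i$ preserves strongness, and in your displayed verification you compute $(\Psi_i\Phi_{ij}\Psi_i\inv)^*g_\alpha$, but the twist is the \emph{two-sided} expression $\Psi_i\Phi_{ij}\Psi_j\inv$ with $i\neq j$, which is not literally a conjugation. Your conjugation argument therefore does not apply as written.

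The gap is closed either by writing
$$\Psi_i\Phi_{ij}\Psi_j\inv=\Psi_i\bigl(\Phi_{ij}\Psi_{ij}\inv\bigr)\Psi_i\inv,$$
and noting $\Phi_{ij}\Psi_{ij}\inv\in\A_c(U_{ij})$ because $\Psi_{ij}\in\A(U_{ij})\subset\A_c(U_{ij})$ --- this genuinely uses the hypothesis that $\Psi_{ij}$ is \emph{holomorphic}, which your argument never invokes --- and then applying your conjugation claim; or by arguing directly: with $\{g_\alpha\}$ a standard generating set of $\O_\gf(Y_{U_{ij}})$, both $\{\Psi_i^*g_\alpha\}$ and $\{\Psi_j^*g_\alpha\}$ are standard generating sets of $\O_\gf(X_{U_{ij}})$, and they differ by the \emph{holomorphic} change of basis $(\Psi_i\Psi_j\inv)^*$, so if $\Phi_{ij}^*(\Psi_j^*g_\alpha)=\sum a_{\alpha\beta}\Psi_j^*g_\beta$ with $a_{\alpha\beta}\in\CC(U_{ij})$, then $(\Psi_i\Phi_{ij}\Psi_j\inv)^*g_\alpha=\sum(k a)_{\alpha\beta}\,g_\beta$ for a holomorphic matrix $k$, hence still continuous. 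Either way the fix is short, but it is not a matter of pure bookkeeping: it is precisely where one uses that $\Psi_{ij}\in Z^1(Q,\A)$ rather than merely $Z^1(Q,\A_c)$. The parenthetical discussion of the inverse map being ``the twist by $\Psi_{ij}\inv$'' is also a bit garbled, but the correct formula $\Theta_{ij}\mapsto\Psi_i\inv\Theta_{ij}\Psi_j$ appears and is what matters.
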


 \begin{corollary}\label{cor:onetermholomorphic}
 Let $\Phi_{ij}(t)$ be a homotopy of   cocycles with values in $\A_c(U_i\cap U_j)$ where $\{U_i\}$ is an open cover of $Q$. Suppose that $\Phi_{ij}(0)$ is holomorphic. Then there are   $c_i\in\AA_c(U_i)$ such that  $\Phi_{ij}(t)=c_i(t)\inv\Phi_{ij}(0)c_j(t)$ for all $t$. 
 \end{corollary}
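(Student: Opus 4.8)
The plan is to deduce this from Theorem \ref{thm:mainAc}(3) by the twist construction of Lemma \ref{lem:reduceId}. Since $\Psi_{ij}:=\Phi_{ij}(0)$ is a holomorphic cocycle for $\{U_i\}$, \cite[Theorem 5.11]{KLSOka} furnishes a Stein $G$-manifold $Y$ with quotient $Q$ together with $G$-biholomorphisms $\Psi_i\colon X_i\to Y_i$ over $\Id_{U_i}$ satisfying $\Psi_i\inv\Psi_j=\Psi_{ij}$. First I would form the twisted family $\tilde\Phi_{ij}(t):=\Psi_i\Phi_{ij}(t)\Psi_j\inv$. By Lemma \ref{lem:reduceId} each $\tilde\Phi_{ij}(t)$ lies in $\A_c^Y(U_i\cap U_j)$, and conjugation by the fixed biholomorphisms $\Psi_i$ and $\Psi_j\inv$ carries a matrix associated to $\Phi_{ij}(t)$ (continuous in $(t,q)$) to one associated to $\tilde\Phi_{ij}(t)$ that is again continuous in $(t,q)$; hence $t\mapsto\tilde\Phi_{ij}(t)$ is a homotopy in the sense of \S\ref{sec:homotopiesAc}, and the cocycle identity $\tilde\Phi_{ik}(t)=\tilde\Phi_{ij}(t)\tilde\Phi_{jk}(t)$ holds pointwise in $t$.

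Next I would observe that this homotopy starts at the identity: $\tilde\Phi_{ij}(0)=\Psi_i\Psi_{ij}\Psi_j\inv=\Psi_i(\Psi_i\inv\Psi_j)\Psi_j\inv=\Id$ on $Y_{U_i\cap U_j}$. Therefore $\{\tilde\Phi_{ij}(t)\}$ is an element of $Z^1(Q,\AA_c^Y)$. Since the constructions of \S\ref{sec:stronglycontinuous}--\S\ref{sec:homotopiesAc} apply verbatim to any Stein $G$-manifold with quotient $Q$, in particular to $Y$, Theorem \ref{thm:mainAc}(3) gives $H^1(Q,\AA_c^Y)=0$. Thus there are continuous paths $\tilde c_i\in\AA_c^Y(U_i)$ with $\tilde c_i(0)=\Id$ and $\tilde\Phi_{ij}(t)=\tilde c_i(t)\inv\tilde c_j(t)$ on $U_i\cap U_j$ for all $t\in[0,1]$.

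Finally I would untwist. Put $c_i(t):=\Psi_i\inv\tilde c_i(t)\Psi_i$; this is a continuous path of strong $G$-homeomorphisms of $X_i$ over $\Id_{U_i}$ with $c_i(0)=\Id$, so $c_i\in\AA_c(U_i)$. Over $U_i\cap U_j$, using $\Phi_{ij}(0)=\Psi_{ij}=\Psi_i\inv\Psi_j$,
$$
c_i(t)\inv\Phi_{ij}(0)c_j(t)=\Psi_i\inv\tilde c_i(t)\inv\Psi_i\cdot\Psi_i\inv\Psi_j\cdot\Psi_j\inv\tilde c_j(t)\Psi_j=\Psi_i\inv\bigl(\tilde c_i(t)\inv\tilde c_j(t)\bigr)\Psi_j=\Psi_i\inv\tilde\Phi_{ij}(t)\Psi_j=\Phi_{ij}(t),
$$
which is the assertion. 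The only delicate points are bookkeeping: that the twist respects the notion of continuous homotopy (checked in the first paragraph) and that Theorem \ref{thm:mainAc} is available for $Y$. I do not expect a genuine new obstacle here — all the substance is already contained in Lemma \ref{lem:reduceId} and Theorem \ref{thm:mainAc}(3).
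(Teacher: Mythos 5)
Your proposal is correct and follows exactly the paper's one-line proof: twist by the $\Psi_i$ coming from the holomorphic cocycle $\Phi_{ij}(0)$ via Lemma \ref{lem:reduceId}, apply Theorem \ref{thm:mainAc}(3) to the resulting homotopy based at the identity, and untwist. You have simply written out the bookkeeping that the paper leaves implicit.
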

 
 \begin{proof}
 By Lemma \ref{lem:reduceId} we may reduce to the case that $\Phi_{ij}(0)$ is the identity, so we can apply Theorem \ref{thm:mainAc}.
 \end{proof}
 
 Let $X$, $Y$ and the $\Psi_i$ be as above. We say that a $G$-homeomorphism $\Phi\colon X\to Y$ is \emph{strong\/} if $\Psi_i\inv\circ\Phi\colon X_i\to X_i$ is strong for all $i$, i.e., in $\A_c(U_i)$. It is easy to see that this does not depend upon the particular choice of the $\Psi_i$. Similarly one can define what it means for a family $\Phi(t)$ of strong $G$-homeomorphisms to be continuous, $t\in[0,1]$. Then we have the following nice result   \cite[Theorem 1.4]{KLSOka}.
 
 \begin{theorem}\label{thm:mainKLSOka}
 Let $\Phi\colon X\to Y$ be strongly continuous. Then there is a continuous family $\Phi(t)$ of strong $G$-homeomorphisms from $X$ to $Y$ with $\Phi(0)=\Phi$ and $\Phi(1)$ holomorphiic.
 \end{theorem}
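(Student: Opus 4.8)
The plan is to treat this as a parametric Oka problem and run a Grauert-style bumping argument over an exhaustion of $Q$, using two local tools. The first is the logarithm $\log\colon\A_c\to\LAc$ of Theorem \ref{thm:log}, which linearizes the problem near the identity. The second is an Oka--Weil type approximation: over a Runge Stein pair $V\Subset U$ in $Q$, every section of $\LAc$ over $\overline V$ should be approximable, uniformly on compacts, by sections of $\LA$ over $U$. This is plausible because, by the discussion following Definition \ref{def:standard-generators}, the sheaf of $G$-finite holomorphic functions is locally finitely generated over $\O_Q$; so on a slice neighborhood $\LAc$ (resp.\ $\LA$) is, via a standard generating set, the sheaf of continuous (resp.\ holomorphic) sections of a coherent $\O_Q$-module presented by a matrix of holomorphic functions, and Cartan's Theorems A and B together with Oka--Weil then yield the approximation.

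First I would reformulate. Fixing the $G$-biholomorphisms $\Psi_i\colon X_i\to Y_i$ and the holomorphic cocycle $\Psi_{ij}=\Psi_i\inv\Psi_j\in\A(U_i\cap U_j)$ from the excerpt, set $c_i=\Psi_i\inv\circ\Phi|_{X_i}\in\A_c(U_i)$, so that $\Psi_{ij}=c_ic_j\inv$ on overlaps. A continuous family $\Phi(t)$ of strong $G$-homeomorphisms $X\to Y$ with $\Phi(0)=\Phi$ is the same as a continuous family of compatible tuples $\bigl(c_i(t)\bigr)$, $c_i(t)\in\A_c(U_i)$, with $c_i(0)=c_i$ and $\Psi_{ij}=c_i(t)c_j(t)\inv$ for all $t$; and $\Phi(1)$ is holomorphic precisely when each $c_i(1)\in\A(U_i)$. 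So it suffices to deform the continuous trivialization $(c_i)$ of the holomorphic cocycle $(\Psi_{ij})$ to a holomorphic one through such trivializations; twisting by the $\Psi_i$ as in Lemma \ref{lem:reduceId} identifies this with the analogous question for $\A_c^Y$ versus $\A^Y$, which is sometimes more convenient.

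Then I would fix a normal exhaustion $K_1\subset V_2\subset K_2\subset\cdots$ of $Q$ by compacts, with $V_n=\operatorname{int}K_n$ Runge and each $K_n$ covered by finitely many of the $U_i$ whose relevant intersections $U_i\cap U_j$ lie in open sets of the form $T_B\sl G$ where \S\ref{sec:logarithms} applies. Over such a neighborhood one first subdivides $[0,1]$ finely (as in the proof of Theorem \ref{thm:mainAc}) so that all partial automorphisms encountered lie in the domain of $\log$; writes each as $\exp D$ with $D\in\LAc$; approximates $D$ by a holomorphic $D'\in\LA$; and joins $\exp D$ to the holomorphic $\exp D'$ along $s\mapsto\exp\bigl((1-s)D+sD'\bigr)$, which is defined for all $s$ since elements of $\LAc$ are complete (Remark \ref{rem:Dcomplete}). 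The discrepancy between the locally modified tuples is then a cocycle with values in $\A_c$ close to the identity (the $\Psi_{ij}$ being left untouched and holomorphic), which the Cartan splitting lemma from the proof of Theorem \ref{thm:mainAc}(3) removes; since the pieces split are near the identity the splitting is itself an exponential and can be taken holomorphic. Globalization then proceeds by induction on $n$: having made $\Phi(t)$ holomorphic over a neighborhood of $K_{n-1}$, one repeats the local step over $K_n$, using that $V_{n-1}$ is Runge in $V_n$ to keep the new homotopy uniformly small on $K_{n-1}$ (the interpolation-with-approximation device of \cite{Cartan58}), and composes the homotopies; by Corollary \ref{cor:Ac_complete} the composite converges uniformly on compacts to a strong $G$-homeomorphism $\Phi(1)$ holomorphic over every $K_n$, hence over $Q$.

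The hard part will be the Oka--Weil/Cartan approximation of strongly continuous sections of $\LAc$ by holomorphic sections of $\LA$, and having it in a form uniform enough to survive the non-abelian bumping, where one must exponentiate, split near the identity, and re-linearize at each stage without disturbing the holomorphy already attained on the previous compactum. In other words, the crux is to establish that locally over $Q$ the pair $(\LAc,\LA)$ behaves like the pair of continuous and holomorphic sections of one coherent analytic sheaf, so that the classical Oka machinery can be imported; verifying this from standard generating sets and the finite-generation statement after Definition \ref{def:standard-generators}, and then carrying out the non-abelian exhaustion argument cleanly, is where the real work lies.
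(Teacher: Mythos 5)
This statement is not actually proved in the paper: it is cited verbatim from \cite[Theorem 1.4]{KLSOka} and used as a black box, so there is no in-paper proof to compare against. Judged on its own terms, though, your sketch has a fatal flaw at its pivot point. You propose an ``Oka--Weil type approximation'': over a Runge pair $V\Subset U$, every section of $\LAc$ over $\overline V$ should be approximable uniformly on compacts by sections of $\LA$ over $U$, and your whole bumping scheme hangs on the local corrections being small so the discrepancy cocycle lands near the identity. But a uniform limit (on a set with nonempty interior) of holomorphic functions is holomorphic; hence a genuinely continuous, non-holomorphic matrix $(d_{ij})$ cannot be uniformly approximated by holomorphic ones on any such compact. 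Oka--Weil and Cartan's Theorems A/B approximate holomorphic by holomorphic; they never bridge the continuous--holomorphic gap. Without this step the discrepancy cocycle is not close to the identity, the exponential and Cartan-splitting steps you invoke do not apply, and the induction cannot proceed. There is also a more minor confusion: the input is a single $\Phi$, not a path, so there is no $[0,1]$ to subdivide ``as in the proof of Theorem \ref{thm:mainAc}''---that subdivision trick is for deforming a given homotopy, not for manufacturing one from a single non-infinitesimal automorphism, whose $c_i$ need not lie in the domain of $\log$ at all.

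The actual route in \cite{KLSOka}---which this paper leans on elsewhere, see the proof of Lemma~\ref{lem:holomhomotopy} using the sheaf $\F$ of strict $G$-diffeomorphisms and \cite[Theorems 8.7 and 10.1]{KLSOka}---is a two-stage \emph{homotopy} rather than an approximation: first deform the strong $G$-homeomorphism to a smooth (strict) $G$-diffeomorphism in $\F$ (easy, via partitions of unity), then deform the smooth object to a holomorphic one via $\bar\partial$-type methods and the path-connectedness of the relevant section spaces. If you want to salvage your scheme, you should replace the false ``approximate $\LAc$ by $\LA$'' step with an intermediate passage to a smooth sheaf of vector fields, and understand that the smooth-to-holomorphic step is a deformation along a path in section space, not a sup-norm approximation of the starting section.
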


\begin{proof}[Proof of Theorem \ref{thm:A}]
We have $\Phi_{ij}$, $\Psi_{ij}\in Z^1(Q,\A)$ and   $c_i\in\A_c(U_i)$ satisfying $\Phi_{ij}=c_i \Psi_{ij}c_j\inv$. Using Lemma \ref{lem:reduceId} we may assume that $\Phi_{ij}$ is the trivial class. Then the $c_i$ are the same thing as a strong $G$-homeomorphism $\Theta\colon X\to Y$ where $Y$ is the Stein $G$-manifold corresponding to the $\Psi_{ij}$(after our twisting). By Theorem \ref{thm:mainKLSOka} not only are there $d_i\in\A(U_i)$ such that $\Psi_{ij}=d_i d_j\inv$, but the $d_i$ are $e_i(1)$ where $e_i(t)$ is a path in $\A_c(U_i)$ starting at $c_i$ and ending at $d_i$. The $d_i$ correspond to a $G$-biholomorphism of $X$ and $Y$ over $Q$. 
\end{proof}

We now prepare to prove Theorem \ref{thm:D}.

\begin{lemma}\label{lem:holomhomotopy}
Let $\Phi\in\AA_c(Q)$ such that $\Phi(1)$ is holomorphic. Then $\Phi$ is homotopic to $\Phi'\in\AA(Q)$ where $\Phi'(1)=\Phi(1)$.
\end{lemma}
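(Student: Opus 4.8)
The plan is to deform $\Phi$ in stages, keeping the two ends $t=0$ (value $\id$) and $t=1$ (value $\Phi(1)$) fixed throughout, until all the data defining the path are holomorphic.

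\emph{Reparametrization.}\ First I would pick a continuous surjection $\lambda\colon[0,1]\to[0,1]$ with $\lambda(0)=0$, $\lambda(1)=1$ that is constant near $0$ and near $1$, e.g. the piecewise-linear map equal to $0$ on $[0,\delta]$ and to $1$ on $[1-\delta,1]$. Then $t\mapsto\Phi(\lambda(t))$ is homotopic to $\Phi$ rel endpoints via the convex combinations $(1-s)\,\id+s\lambda$ of maps $[0,1]\to[0,1]$, so we may assume $\Phi\equiv\id$ on $[0,\delta]$ and $\Phi\equiv\Phi(1)$ on $[1-\delta,1]$. In particular the ``moving part'' of $\Phi$ sits in a compact subinterval and the path is holomorphic (indeed constant) near both ends.

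\emph{Logarithms and a polygonal reduction.}\ Next I would localize and take logarithms, exactly as in the proof of Theorem \ref{thm:mainAc}: exhausting $Q$ by compacta and applying Theorem \ref{thm:log}, there is a partition $0=t_0<\dots<t_m=1$ and, after multiplying by cutoff functions and patching by the limiting procedure of Theorem \ref{thm:mainAc}, continuous families $E_j(t)\in\LAc(Q)$ with $E_j(t_j)=0$ and $\Phi(t)=\Phi(t_j)\exp E_j(t)$ for $t\in[t_j,t_{j+1}]$. For each $j$ the straight-line homotopy $(s,t)\mapsto(1-s)E_j(t)+s\,\tfrac{t-t_j}{t_{j+1}-t_j}E_j(t_{j+1})$ in the Fréchet space $\LAc(Q)$ fixes $E_j$ at $t_j$ (value $0$) and at $t_{j+1}$ (value $E_j(t_{j+1})$); exponentiating and left-multiplying by $\Phi(t_j)$, these deformations agree at every node (the path value at $t_{j+1}$ stays $\Phi(t_{j+1})$ throughout), hence glue to a homotopy rel endpoints from $\Phi$ to the ``polygonal'' path $\tilde\Phi$ whose $j$-th edge is $t\mapsto\Phi(t_j)\exp\big(\tfrac{t-t_j}{t_{j+1}-t_j}E_j^*\big)$ with $E_j^*:=E_j(t_{j+1})\in\LAc(Q)$. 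Here $\tilde\Phi(1)=\Phi(1)$. If the path had remained near $\id$ (the case $m=1$) we would be done: then $E_0^*=\log\Phi(1)\in\LA(Q)$ because $\Phi(1)$ is holomorphic, so $\tilde\Phi(t)=\exp(t\log\Phi(1))$ is already holomorphic.

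\emph{The crux: passing to the holomorphic category.}\ In general $\tilde\Phi$ still runs through the non-holomorphic vertices $\Phi(t_1),\dots,\Phi(t_{m-1})$ and is assembled from the non-holomorphic $E_j^*\in\LAc(Q)$, and there is no density of $\LA(Q)$ in $\LAc(Q)$ allowing us to perturb these into the holomorphic category — this is the main obstacle, and it is where the elementary arguments of this section run out. What is available is that $g:=\Phi(1)\in\A(Q)$ is joined to $\id$ through $\A_c(Q)$, and Theorem \ref{thm:mainKLSOka}, the Oka-principle input of \cite{KLSOka}, must be brought in: applied to the strong $G$-homeomorphisms $X\to X$ arising along the subdivision — equivalently, to move each vertex $\Phi(t_j)$ to a holomorphic automorphism, with $\Phi(t_0)=\id$ and $\Phi(t_m)=g$ already holomorphic — it lets one connect $g$ to $\id$ through \emph{holomorphic} automorphisms and, by tracking the homotopies it produces together with the polygonal reduction above, reassemble them into a path $\Phi'\in\AA(Q)$ with $\Phi'(1)=g$ and $\Phi\simeq\Phi'$ rel endpoints. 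In effect this last step is the homotopy-theoretic form of the Oka principle for our sheaves — injectivity of $\pi_0(\A(Q))\to\pi_0(\A_c(Q))$ and surjectivity of $\pi_1(\A(Q))\to\pi_1(\A_c(Q))$ — and keeping the homotopies compatible at the nodes $t_j$ while transferring from the continuous to the holomorphic category is the delicate point.
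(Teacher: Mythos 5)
There is a genuine gap at exactly the point you flag as the ``crux,'' and the gap is not closed by the argument you offer. Theorem~\ref{thm:mainKLSOka} (i.e.\ \cite[Theorem~1.4]{KLSOka}) gives $\pi_0$-surjectivity — any strong $G$-homeomorphism $X\to Y$ is homotopic through strong $G$-homeomorphisms to a holomorphic one — but it gives no control on endpoints, no way to make the holomorphic replacements at the interior vertices $\Phi(t_1),\dots,\Phi(t_{m-1})$ agree along overlapping edges, and, crucially, nothing that lets you pass the \emph{edges} $E_j^*\in\LAc(Q)$ of your polygonal path to holomorphic data. Applying Theorem~\ref{thm:mainKLSOka} vertex-by-vertex leaves you with $m-1$ independent homotopies that simply do not ``reassemble''; the relative/parametric Oka statement you correctly say is needed (injectivity on $\pi_0$ and surjectivity on relative $\pi_1$) is exactly what has to be \emph{proved}, not invoked. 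So the polygonal reduction, which you yourself note doesn't advance matters, ends up being a detour, and the appeal to Theorem~\ref{thm:mainKLSOka} is a substitution of the conclusion for the argument.

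The paper's proof goes via a different and stronger pair of inputs from \cite{KLSOka}. It works in the auxiliary sheaf $\F$ of fiberwise-algebraic $G$-diffeomorphisms and the associated sheaf $\FF$ of homotopies $\Psi(t)$ with $\Psi(0)=\id$ and $\Psi(1)$ holomorphic, and uses \cite[Theorem~10.1]{KLSOka}: $\FF(Q)$ is \emph{pathwise connected}. That is precisely the ``relative'' Oka statement you are missing: a path from $\Phi$ to the constant identity element of $\FF(Q)$, evaluated at $t=1$, is a path from $\Phi(1)$ to $\id$ entirely inside $\A(Q)$, and the full two-parameter family supplies the homotopy in $\AA_c(Q)$ between $\Phi$ and that holomorphic path. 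To get from $\AA_c$ into $\FF$ in the first place, the paper does not do a polygonal reduction; it uses the ``fatten the time-interval to a disk'' trick: choose $\rho\colon\Delta\to[0,1]$ constant near $0$ and near $1$ (this plays the role of your reparametrization $\lambda$, but inside a complex disk $\Delta\supset[0,1]$), form $\Psi(z,x)=(z,\Phi(\rho(z),x))\in\A_c^{\Delta\times X}(\Delta\times Q)$, and apply \cite[Theorem~8.7]{KLSOka} to deform this single continuous section of the bigger quotient to an element of $\F(\Delta\times Q)$, unchanged near $\{0,1\}\times Q$ where it is already the identity or holomorphic. Restricting back to $[0,1]$ produces the desired element of $\FF(Q)$ with the same value at $t=1$. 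In short: your reparametrization is the right first move, but you need to turn the path into a single section over $\Delta\times Q$ rather than chop it into logarithmic pieces, and you need the two specific theorems \cite[8.7 and 10.1]{KLSOka} rather than the weaker Theorem~\ref{thm:mainKLSOka}.
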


\begin{proof}
We have to make use of a sheaf of groups $\F$ on $Q$ which is a subsheaf of the sheaf of  $G$-diffeomorphisms  of $X$ which induce the identity on $Q$ and are algebraic isomorphisms on the fibers of $p$. See \cite[Ch.\ 6]{KLSOka}. We give $\F(U)$   the usual $\ci$-topology.  Let $\FF(U)$ denote the sheaf of homotopies $\Psi(t)$ of elements of $\F(U)$, $t\in[0,1]$, where $\Psi(0)$ is the identity and $\Psi(1)$ is holomorphic. Then \cite[Theorem 10.1]{KLSOka} tells us that $\FF(Q)$ is pathwise connected. Hence for  $\Phi\in\FF(Q)$   there is a homotopy $\Psi(s)\in\FF(Q)$ such that $\Psi(0)=\Phi$ and $\Psi(1)$ is the identity. Then $\Psi(s)$ evaluated at $t=1$ is a homotopy from $\Phi(1)$ to the identity in $\A(Q)$, establishing the lemma when $\Phi\in\FF(Q)$.

 We now use a standard trick. 
Let $\Delta$ denote a disk in $\C$ containing $[0,1]$ with trivial $G$-action. Then $\Delta\times X$ has quotient $\Delta\times Q$ with the obvious quotient mapping. Let  $\rho\colon \Delta\to [0,1]$ be continuous such that $\rho$ sends a neighborhood of $0$ to $0$ and a neighborhood of $1$ to $1$. For $(z,x)\in \Delta\times X$, define $\Psi(z,x)=(z,\Phi(\rho(z),x))$. Then $\Psi\in\tilde\A_c(\Delta\times Q)$ where $\tilde\A_c=\A_c^{\Delta\times X}$. Moreover, $\Psi$ is the identity on the inverse image of a neighborhood of $\{0\}\times Q$ and is holomorphic on  the inverse image of a neighborhood of $\{1\}\times Q$.   By \cite[Theorem 8.7]{KLSOka}   we can find a homotopy $\Psi(s)$ which starts at $\Psi$ and ends up in $\F(\Delta\times Q)$. Moreover, the proof shows that we can assume that the elements of the homotopy are unchanged  over a neighborhood of $\{0,1\}\times Q$. 
Restricting $\Psi(1)$ to $[0,1]\subset \Delta$ we have an element in $\FF(Q)$ which at time 1 is still $\Phi(1)$. Then we can apply the argument above.
\end{proof}

\begin{proof}[Proof of Theorem \ref{thm:D}]
By Lemma \ref{lem:reduceId} we may assume that $\Phi_{ij}(0)$ is the identity cocycle. Since $H^1(Q,\AA_c)$ is trivial, there are $c_i\in  \AA_c(U_i)$ such that $\Phi_{ij}(t)=c_i(t)c_j(t)\inv$ for $t\in[0,1]$. Now the $c_i(1)$ define a strongly continuous $G$-homeomorphism from $X$ to the Stein $G$-manifold $Y$ corresponding to $\Phi_{ij}(1)$. By Theorem \ref{thm:mainKLSOka} there is a homotopy $c_i(t)$, $1\leq t\leq 2$, such that the $c_i(2)$ are holomorphic and split $\Phi_{ij}(1)$. Reparameterizing, we may reduce to the case that the original $c_i(t)$ are holomorphic for $t=1$. Now apply  Lemma \ref{lem:holomhomotopy} to $\Psi_{ij}(t)=c_i(t)c_j(t)\inv$.
\end{proof}

\section{$H^1(Q,\A)\to H^1(Q,\A_c)$ is a bijection}\label{sec:proveB}

We give a proof Theorem  \ref{thm:B}. We are given an open cover $\{U_i\}$ of $Q$ and $\Phi_{ij}\in Z^1(Q,\A_c)$. We want to find $c_i\in\A_c(U_i)$ such that $c_i\inv\Phi_{ij} c_j$ is holomorphic.
We may assume that the $U_i$ are relatively compact, locally finite and Runge.   We say that an open set $U\subset Q$ is \emph{good} if there are sections $c_i\in \A_c(U_i\cap U)$ such that $c_i\inv\Phi_{ij}c_j$ is holomorphic on $U_{ij}\cap U$ for all $i$ and $j$ where $U_{ij}$ denotes $U_i\cap U_j$. This says that $\{\Phi_{ij}\}$ is cohomologous to a holomorphic cocycle on $U$. The goal is to show that $Q$ is good. It is obvious that small open subsets of $Q$ are good.

\begin{lemma}\label{lem:2opensetsgood}
Suppose that $Q=Q'\cup Q''$ where $Q'$ and $Q''$ are good and $Q'\cap Q''$ is Runge in $Q$. Then $Q$ is good.
\end{lemma}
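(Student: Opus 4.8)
The plan is to reduce the claim to an amalgamation problem that can be solved by the tools already developed, chiefly Theorem \ref{thm:mainKLSOka} (the deformation from strong to holomorphic), Theorem \ref{thm:A} (holomorphic splitting of a holomorphic cocycle that is strongly continuously split), and the twist construction of Lemma \ref{lem:reduceId}. Since $Q'$ and $Q''$ are good, we have sections $c_i'\in\A_c(U_i\cap Q')$ and $c_i''\in\A_c(U_i\cap Q'')$ with $(c_i')\inv\Phi_{ij}c_j'$ and $(c_i'')\inv\Phi_{ij}c_j''$ holomorphic on $U_{ij}\cap Q'$ resp. $U_{ij}\cap Q''$. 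On the overlap $Q'\cap Q''$ the two trivializations differ by $d_i:=(c_i')\inv c_i''\in\A_c(U_i\cap Q'\cap Q'')$, and the key observation is that $d_i\inv\bigl((c_i')\inv\Phi_{ij}c_j'\bigr)d_j=(c_i'')\inv\Phi_{ij}c_j''$, so the $d_i$ conjugate one \emph{holomorphic} cocycle on $Q'\cap Q''$ into another holomorphic cocycle on $Q'\cap Q''$. In the language of Lemma \ref{lem:reduceId}, after twisting by the Stein $G$-manifold on $Q'$ determined by the holomorphic cocycle $(c_i')\inv\Phi_{ij}c_j'$, the family $\{d_i\}$ becomes a strong $G$-homeomorphism $\Theta$ over $Q'\cap Q''$ between two Stein $G$-manifolds that are each locally $G$-biholomorphic to $X$ over that set.

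The heart of the argument is then to apply Theorem \ref{thm:A} over $Q'\cap Q''$: there exist \emph{holomorphic} sections $e_i\in\A(U_i\cap Q'\cap Q'')$ with $e_i\inv\bigl((c_i')\inv\Phi_{ij}c_j'\bigr)e_j=(c_i'')\inv\Phi_{ij}c_j''$, i.e.\ the holomorphic cocycles coming from the two sides are cohomologous \emph{holomorphically} over the Runge intersection $Q'\cap Q''$. Equivalently the two Stein $G$-manifolds over $Q'\cap Q''$ are $G$-biholomorphic over the identity. Because $Q'\cap Q''$ is Runge in $Q$ (hence Runge in $Q'$), one can then run a Cartan-type patching: the holomorphic isomorphism $\{e_i\}$ over $Q'\cap Q''$ can be split holomorphically as $e_i=f_i\inv g_i$ where $\{f_i\}$ is defined over $Q'$ and $\{g_i\}$ over $Q''$ — this is a Cartan lemma / Cousin-type statement for the sheaf $\A$ of nonabelian groups over the Runge pair $(Q',Q'')$, and it is exactly the sort of statement proved locally at the start of \S\ref{sec:proveB} together with the Runge hypothesis. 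Replacing $c_i'$ by $c_i'f_i$ on $Q'$ and $c_i''$ by $c_i''g_i$ on $Q''$, the two trivializations agree on $Q'\cap Q''$, glue to a single $c_i\in\A_c(U_i\cap Q)$, and the resulting $c_i\inv\Phi_{ij}c_j$ is holomorphic on each of $U_{ij}\cap Q'$ and $U_{ij}\cap Q''$, hence on $U_{ij}\cap Q$. Thus $Q$ is good.

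The step I expect to be the main obstacle is the holomorphic amalgamation of $\{e_i\}$ over the Runge pair — i.e.\ producing the splitting $e_i=f_i\inv g_i$ with $f_i$ holomorphic on $Q'$ and $g_i$ holomorphic on $Q''$. For abelian coefficients this is the classical Cousin I/Runge approximation argument, but here the coefficient sheaf $\A$ is a sheaf of nonabelian groups, so one must instead argue via a convergent iteration: approximate $e_i$ on a Runge neighborhood by something of the form $f_i\inv g_i$ up to a small error in $\A_c$, use Theorem \ref{thm:log} to take logarithms of the error, correct, and iterate, invoking Corollary \ref{cor:Ac_complete} for convergence — and then, since we start from a holomorphic $\{e_i\}$, Theorem \ref{thm:A} again upgrades the strongly continuous splitting obtained this way to a holomorphic one. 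Managing the bookkeeping so that the corrections stay in the domains $U_i\cap Q'$ and $U_i\cap Q''$ and converge on compacta is the delicate point; everything else is formal manipulation of cocycles.
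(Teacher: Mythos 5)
Your high-level plan — trivialize on $Q'$ and on $Q''$, consider $d_i=(c_i')\inv c_i''$, apply Theorem~\ref{thm:A} to produce holomorphic $e_i$ conjugating $\Psi'_{ij}$ to $\Psi''_{ij}$, then patch — matches the paper's opening moves. But two things go wrong downstream.

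First, the step where you "split $e_i=f_i\inv g_i$ holomorphically" and then claim "the two trivializations $c_i'f_i$ and $c_i''g_i$ agree on $Q'\cap Q''$ and glue to a single $c_i$" is not correct. Agreement of the modified trivializations requires $(c_i')\inv c_i'' = f_ig_i\inv$, i.e.\ $d_i=f_ig_i\inv$. Since $d_i$ is only strongly continuous and $f_i,g_i$ are supposed to be holomorphic, this would force $d_i$ to be holomorphic, which it is not. The relation $e_i=f_i\inv g_i$ concerns the \emph{holomorphic} $e_i$, not $d_i$, and $e_i\ne d_i$ in general. What such a splitting of $e_i$ actually buys you is a \emph{glued holomorphic cocycle} $\Psi_{ij}$ on $U_{ij}\cap Q$ (because $f_i\Psi_{ij}'(f_j)\inv=g_i\Psi_{ij}''(g_j)\inv$ on the triple overlap) — but the strongly continuous cochains $c_i'f_i\inv$ and $c_i''g_i\inv$ trivializing $\Phi_{ij}$ against $\Psi_{ij}$ still do \emph{not} match on $Q'\cap Q''$. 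The paper handles exactly this residual mismatch by a second step you are missing: it twists by the glued cocycle $\Psi_{ij}$ (Lemma~\ref{lem:reduceId}) to reduce to the case where the local holomorphic cocycles are trivial; only then do the $h_i=(c_i')\inv c_i''$ become equal on triple overlaps and glue to a single section $h\in\A_c(Q'\cap Q'')$, which one can treat as a one-element cocycle over the two-set cover $\{Q',Q''\}$ and replace by a homotopic holomorphic $\tilde h$ via Theorem~\ref{thm:mainKLSOka} and Corollary~\ref{cor:onetermholomorphic}.

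Second, the splitting of $e_i$ itself is the delicate point and your proposed route to it — a Runge/Cartan iteration for the nonabelian sheaf $\A$, referring to something "proved locally at the start of \S\ref{sec:proveB}" — is not what is available. Nothing of that form is established at the start of \S\ref{sec:proveB}; moreover the iteration would need a Runge relation between $Q'\cap Q''$ and $Q'$ (or $Q''$), not merely $Q'\cap Q''$ Runge in $Q$, and it would need $e_i$ close to the identity, neither of which you have. The paper avoids this entirely: it observes that $d_i$ is \emph{automatically} an $\A_c$-coboundary over $\{U_i\cap Q',U_i\cap Q''\}$ (it literally is $(c_i')\inv c_i''$), uses Theorem~\ref{thm:mainKLSOka} to produce a homotopy from $d_i$ to a holomorphic $k_i$, uses Corollary~\ref{cor:onetermholomorphic} to conclude $k_i$ is still an $\A_c$-coboundary, and only then invokes Theorem~\ref{thm:A} to upgrade to an $\A$-coboundary $k_i=(h_i')\inv h_i''$. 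That is a genuinely different mechanism from a Cartan-type iteration and is the part you would need to supply.
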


\begin{proof}
 By hypothesis, we have  $ c_i'\in\A_c(Q'\cap U_i)$ and $c_i''\in\A_c(Q''\cap U_i)$ such that 
$$
\Psi'_{ij}=(c_i')\inv \Phi_{ij}c'_j,\text{ and }\Psi''_{ij}=(c''_i)\inv\Phi_{ij}c''_j\text{ are holomorphic}.
$$
 Then on $U_{ij}\cap Q'\cap Q''$ we have
 $$
 \Psi_{ij}''=h_i\inv\Psi'_{ij}h_j\text{ where }h_i=(c_i')\inv c_i''.
 $$
 The $\Psi_{ij}'$ are a holomorphic cocycle for the covering $U_i\cap Q'$ of $Q'$, hence they correspond to a Stein $G$-manifold $X'$ with quotient $Q'$. Similarly the $\Psi_{ij}''$ give us $X''$, and  $X'$ and $X''$ are locally $G$-biholomorphic to $X$ over $Q'$ and $Q''$, respectively. 
The $h_i$   give us a strong $G$-homeomorphism $h\colon X'\to X''$, everything being taken over $Q'\cap Q''$. By Theorem \ref{thm:mainKLSOka} there is a homotopy $h(t,x)$ with $h(0,x)=h(x)$ and $h(1,x)$ holomorphic. Let $k(x)$ denote $h(1,x)$. Then $k$ corresponds to a family $k_i$ homotopic to the family $h_i$.
 
 Now just consider the space $U_i$ covered by the two open sets $U_i\cap Q'$ and $U_i\cap Q''$. Then $h_i$ and $k_i$ are defined on the intersection of the two open sets and are homotopic where $h_i$ is cohomologous to the trivial cocycle since $h_i=(c_i')\inv c_i''$. By Corollary \ref{cor:onetermholomorphic} and Theorem \ref{thm:A} the cohomology class represented by $k_i(x)$ is holomorphically trivial. Hence there are holomorphic sections $h_i'$ and $h_i''$ such that $k_i=(h_i')\inv h_i''$ on $U_i\cap Q'\cap Q''$. Then $h_i'\Psi_{ij}'(h_j')\inv=h_i''\Psi_{ij}''(h''_j)\inv$ on $U_{ij}\cap Q'\cap Q''$. We construct a holomorphic cocycle $\Psi_{ij}$ on $U_{ij}$ by $\Psi_{ij}=h'_i\Psi_{ij}'(h_j')\inv$ on $U_{ij}\cap Q'$ and $h''_i\Psi_{ij}''(h''_j)\inv$ on $U_{ij}\cap Q''$.
 
Using Lemma \ref{lem:reduceId} we may reduce to the case that $\Psi_{ij}$ is the trivial cocycle. As in the beginning of the proof there are $c_i'\in\A_c(Q'\cap U_i)$ and $c_i''\in\A_c(Q''\cap U_i)$ such that 
$$
\Phi_{ij}|_{X'}=c_i' (c'_j)\inv\text{ and }\Phi_{ij}|_{X''}=c''_i(c''_j)\inv
$$
where $X'=X_{Q'}$ and $X''=X_{Q''}$.
Let $h_i=(c_i')\inv c_i''$. Then $h_i=h_j$ on $U_{ij}\cap Q'\cap Q''$, hence we have a section  $h\in\A_c(Q'\cap Q'')$, and this section gives the same cohomology class as $\Phi_{ij}$ (use the open cover $\{Q'\cap U_i,Q''\cap U_i\}$). By Theorem \ref{thm:mainKLSOka}, $h$ is homotopic to  an element $\tilde h\in\A(Q'\cap Q'')$, and this holomorphic section gives the same cohomology class by Corollary \ref{cor:onetermholomorphic}.   Since going to a refinement of an open cover is injective on $H^1$, we see that our original $\Phi_{ij}$ differs from a holomorphic cocycle by a coboundary. Thus $Q$ is good.
\end{proof}
 
 \begin{proof}[Proof of Theorem \ref{thm:B}]
 Using Lemma \ref{lem:2opensetsgood} as in \cite[\S 5]{Cartan58} we can show that there is a cover of $Q$ by compact subsets $K_n$ such that $K_1\subset V_2\subset K_2\dots$ where $V_j$ is the interior of $K_j$ and such that  a neighborhood of every $K_n$ is good.    We can assume that $U_i\cap V_n\neq \emptyset$ implies that $U_i\subset V_{n+1}$. This is possible by replacing $\{K_n\}$ by a subsequence. For each $n$ we choose  $c_i^n\in\A_c(U_i\cap V_n)$ such that 
   $$
   (c_i^n)\inv\Phi_{ij}c_j^n=\Psi^n_{ij}\text{ is holomorphic on }U_{ij}\cap V_n.
   $$
Then
$$
\Psi_{ij}^n=(d_i^n)\inv\Psi_{ij}^{n+1}d_j^n\text{ on }U_{ij}\cap  V_n
$$  
 where $d_i^n=(c_i^{n+1})\inv c_i^n$ gives a strongly continuous map   from  the Stein $G$-manifold $Y_n$ over $V_n$ obtained using the $\Psi_{ij}^n$ to the Stein $G$-manifold $Y_{n+1}$ obtained using the $\Psi_{ij}^{n+1}$. We know that   the map is homotopic to a holomorphic one. Hence there are homotopies $d_i^n(t)$ on $U_i\cap V_n$ such that
 \begin{enumerate}
\item $\Psi_{ij}^n=(d_i^n(t))\inv \Psi_{ij}^{n+1} d_j^n(t)$ on $U_i\cap U_j\cap V_n$, for all $t$.
\item $d_i^n(0)=(c_i^{n+1})\inv c_i^n$.
\item The $d_i^n(1)$ give  a $G$-equivariant biholomorphic map from $Y_n$ to $Y_{n+1}$ over $\Id_{V_n}$.
 \end{enumerate}
Without changing the $c_i^n$ we may replace the $c_i^{n+1}$ by sections $\tilde c_i^{n+1}$ such that
\begin{enumerate}
\addtocounter{enumi}{3}
\item $\widetilde \Psi_{ij}^{n+1}=(\tilde c_i^{n+1})\inv\Phi_{ij}\tilde c_j^{n+1}$ is holomorphic on $U_i\cap U_j\cap V_{n+1}$.
\item $\tilde c_i^{n+1}=c_i^n$ on $U_i\cap V_{n-2}$.
\end{enumerate}
It suffices to set $\tilde c_i^{n+1}=c_i^{n+1}$ if $U_i\cap V_{n-1}=\emptyset$ and if not, then $U_i\subset V_n$, and we can set
$$
\tilde c_i^{n+1}=c_i^{n+1}\cdot d_i^n(\lambda(x)),
$$
where $\lambda\colon V_n\to [0,1]$ is continuous, $0$ for $x\in V_{n-2}$ and 1 for $x\not\in V_{n-1}$. Then one has (4) and (5). Thus we can arrange  that $c_i^{n+1}=c_i^n$ in $U_i\cap V_{n-2}$, hence we obviously have convergence of the $c_i^n$ to a continuous section $c_i$ such that $(c_i)\inv\Phi_{ij}c_j$ is holomorphic. 
 \end{proof}
 
 We now have Theorems \ref{thm:A} and \ref{thm:B} which imply Corollary \ref{cor:main}, i.e., that  $H^1(Q,\A)\to H^1(Q,\A_c)$ is a bijection.
 
 \begin{proof}[Proof of Theorem \ref{thm:C}]
 This is immediate from Corollary \ref{cor:onetermholomorphic} and Theorem \ref{thm:B}
 \end{proof}
 
 We end with the analogue of an approximation theorem of Grauert.
 \begin{theorem}
 Let $U\subset Q$ be Runge. Suppose that $\Phi\colon X_U\to Y_U$ is biholomorphic and $G$-equivariant inducing $\Id_U$. Here $X$ and $Y$ are locally $G$-biholomorphic over $Q$. Then $\Phi$ can be arbitrarily closely approximated by $G$-biholomorphisms of $X$ and $Y$ over $\Id_Q$ if and only if this is true for strong $G$-homeomorphisms of $X$ and $Y$.
 \end{theorem}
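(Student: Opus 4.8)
The plan is to prove the two implications separately. The implication ``$\Rightarrow$'' is trivial: a $G$-biholomorphism $X\to Y$ over $\Id_Q$ is in particular a strong $G$-homeomorphism, and the topology on strong $G$-homeomorphisms $X_U\to Y_U$ restricts to the usual one on the holomorphic ones, so a limit of restrictions of $G$-biholomorphisms is a limit of restrictions of strong $G$-homeomorphisms.

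For ``$\Leftarrow$'', suppose $\Phi$ is a limit of restrictions to $X_U$ of strong $G$-homeomorphisms $X\to Y$ over $\Id_Q$. Then such a homeomorphism exists, so the class of $Y$ in $H^1(Q,\A)$ maps to the trivial class of $H^1(Q,\A_c)$; by Corollary \ref{cor:main} it is already trivial, so there is a $G$-biholomorphism $\Theta_0\colon X\to Y$ over $\Id_Q$. Composing everything with $\Theta_0\inv$ reduces us to the case $Y=X$, $\Theta_0=\Id$: we are given $\phi\in\A(U)$ which is a limit of restrictions to $X_U$ of elements of $\A_c(Q)$, and we must approximate $\phi$ by restrictions of elements of $\A(Q)$. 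Since composition with the fixed biholomorphism $\Theta_0$ is continuous in the relevant topologies, it suffices, for each compact $K\subset U$ and each neighborhood of $\phi|_{X_K}$, to produce $c''\in\A(Q)$ with $c''|_{X_K}$ in that neighborhood.

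First I would use Theorem \ref{thm:log} and a cutoff to arrange that $\phi$ extends to a strong section over a neighborhood of $K$. Choose nested compacta $K\subset\operatorname{int}K'\subset K'\subset\operatorname{int}K''\subset K''\subset U$. By Theorem \ref{thm:log} there are a neighborhood $\Omega$ of the identity in $\A_c(U)$ and a neighborhood $U'$ of $K'$ with $\overline{U'}\subset K''$ such that every element of $\Omega$ has a small logarithm in $\LAc(U')$. Using the hypothesis, pick $c\in\A_c(Q)$ with $c|_{X_U}$ so close to $\phi$ that $g:=\phi\inv\circ(c|_{X_U})\in\Omega$ and $D:=\log g\in\LAc(U')$ is as small as we like on $K'$. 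Take a continuous $\chi\colon Q\to[0,1]$ equal to $1$ near $K$ with compact support in $U'$; then $\widetilde D:=\chi D$, extended by zero, lies in $\LAc(Q)$ and is small, and $c':=c\circ\exp(-\widetilde D)\in\A_c(Q)$ satisfies $c'|_{X_V}=\phi|_{X_V}$ on a neighborhood $V$ of $K$ (since there $\exp(-\widetilde D)=\exp(-D)=g\inv=(c|_{X_U})\inv\circ\phi$, whence $c'=c\circ c\inv\circ\phi=\phi$). Thus $c'\in\A_c(Q)$ is holomorphic on $X_V$ with $V\supset K$.

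It now suffices to approximate $c'$ by elements of $\A(Q)$ uniformly on $X_K$, for then $\Theta_0\circ c''$ is the biholomorphism we want. This relative form of the Oka principle I would prove by rerunning the exhaustion argument of Theorem \ref{thm:B} (Cartan's method): exhaust $Q$ by compacta $K_1\subset V_2\subset K_2\subset\cdots$ with $K\subset\operatorname{int}K_1$ and a neighborhood of each $K_n$ good, and construct inductively holomorphic sections $c'_n$ over neighborhoods of $K_n$ that approximate $c'$ on $X_{K_n}$, where at the $n$th stage the discrepancy $(c'_{n+1})\inv c'_n$ is holomorphized by Theorem \ref{thm:mainKLSOka} and made arbitrarily small on $X_{K_{n-1}}$ by means of the rigidity statement Theorem \ref{thm:A}. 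Since $c'$ is already holomorphic near $K$ and all corrections are small near $K$, the limit $c'':=\lim c'_n$ exists, belongs to $\A(Q)$, and is as close as desired to $c'=\phi$ on $X_K$. The main obstacle is precisely this induction: propagating the approximation through the exhaustion while keeping the successive corrections small near $K$. This is the technical heart of the proof, a Grauert--Cartan type argument whose closing-up in the present non-abelian setting rests on Theorem \ref{thm:A} (controlling the ambiguity of a coboundary realizing a holomorphic class) and Theorem \ref{thm:mainKLSOka} (supplying the homotopies that holomorphize the corrections).
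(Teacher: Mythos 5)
Your two reductions at the start are valid and cleaner than what the paper does: the ``$\Rightarrow$'' direction is indeed trivial, and using Corollary~\ref{cor:main} to deduce a global $G$-biholomorphism $\Theta_0$ from the mere existence of one strong $G$-homeomorphism, thereby reducing to $Y=X$, is a nice simplification. The modification $c\mapsto c'=c\circ\exp(-\widetilde D)$ via a cutoff of $\log(\phi^{-1}c)$ is essentially the same move the paper makes when it writes $\Psi^{-1}\Phi=\exp D$; both use Theorem~\ref{thm:log} on a Runge neighborhood of $K$.

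The gap is in the last step, and it is not a minor one: you have $c'\in\A_c(Q)$ which is holomorphic near $K$ and you want to approximate it uniformly on $X_K$ by elements of $\A(Q)$. You propose to redo the Cartan exhaustion ``as in Theorem~\ref{thm:B}'', using Theorem~\ref{thm:mainKLSOka} to holomorphize the step-$n$ discrepancy and Theorem~\ref{thm:A} to make it small on $K_{n-1}$. But Theorem~\ref{thm:A} is purely an existence statement about holomorphic coboundaries---it carries no quantitative control and cannot ``make corrections small.'' Likewise, Theorem~\ref{thm:mainKLSOka} provides a homotopy from a strong section to a holomorphic one but gives no bound on how far the holomorphic endpoint moves from the starting point. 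So nothing in the toolkit you invoke actually closes the induction, and the place where the Runge hypothesis on $U$ must enter is never isolated. The paper sidesteps this entire issue: it observes that $(\Phi')^{-1}\Theta'$ is holomorphic on $U'$ and continuously homotopic to the identity, upgrades the homotopy to a \emph{holomorphic} homotopy using Theorem~\ref{thm:D}, and then cites the Runge approximation theorem \cite[Theorem~10.1]{KLSOka} for holomorphic sections of $\A$ homotopic to the identity. That cited theorem is exactly the ``technical heart'' you acknowledge you have not proved. To complete your argument you would need to prove an analogue of \cite[Theorem~10.1]{KLSOka} yourself, or recognize that your $c'$ (or rather its holomorphization via Theorem~\ref{thm:mainKLSOka}) puts you in the setup of that theorem and invoke it as the paper does.
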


\begin{proof}
Let $K\subset U$ be compact and let $\Phi\in\Mor(X_U,Y_U)^G$ be our holomorphic $G$-equivariant map inducing $\Id_U$. We can find a relatively compact open  subset $U'$ of $U$ which contains $K$ and is Runge in $Q$. By hypothesis, there is a strong $G$-homeomorphism $\Psi\colon X\to Y$ which is arbitrarily close to $\Phi$ over $\overline{U'}$. Then $\Psi\inv\Phi=\exp D$ where $D\in\LAc(U')$, hence $\Psi'$ and $\Phi'$ are homotopic, where $\Phi'$ is the restriction of $\Phi$ to $U'$ and similarly for $\Psi'$.   Now $\Psi$ is homotopic to a biholomorphic $G$-equivariant map $\Theta\colon X\to Y$ inducing $\Id_Q$, and  $\Psi'$ is homotopic to the restriction $\Theta'$ of $\Theta$ to $U'$. Then $(\Phi')\inv\Theta'$ is holomorphic and homotopic to the identity section over $U'$. Since the end points of the homotopy are holomorphic, by Theorem \ref{thm:D} we can find a homotopy all of whose elements are holomorphic. 
 By \cite[Theorem 10.1]{KLSOka} there is a section $\Delta\in\A(Q)$ which is arbitrarily close to $(\Phi')\inv\Theta'$ on $U'$.  Then $\Theta\Delta\inv$, restricted to $U'$, is arbitrarily close to $\Phi'$, hence this is true over $K$. This establishes the theorem.
 \end{proof}

  \bibliographystyle{amsalpha}
\bibliography{Oka.paperbib}

  \end{document}